\setlist[enumerate,1]{label=\textup{(\alph*)},
ref={\alph*}, align=left, labelsep=0.5ex, leftmargin=*}
\setlist[enumerate,2]{label=\textup{({\roman*})},
ref={\roman*}, align=right, labelsep*=1ex, widest={(ii)},  
leftmargin=5.4ex}
\DeclareSymbolFont{bbold}{U}{bbold}{m}{n}
\DeclareMathSymbol{\bbone}{\mathord}{bbold}{"31} 
\theoremstyle{plain}
\newtheorem{theorem}{Theorem}[section]
\newtheorem{corollary}{Corollary}[section]
\newtheorem{lemma}{Lemma}[section]
\newtheorem{proposition}{Proposition}[section]
\newtheoremstyle{boldremex}
    {\dimexpr\topsep/2\relax} 
    {\dimexpr\topsep/2\relax} 
    {}          
    {}          
    {\bfseries} 
    {.}         
    {.5em}      
    {}          
\theoremstyle{boldremex}
\newtheorem{remark}{Remark}[section]
\renewenvironment{proof}[1][\proofname]{%
   \par\pushQED{\qed}\normalfont%
   \topsep6\p@\@plus6\p@\relax
   \trivlist\item[\hskip\labelsep\bfseries#1\@addpunct{.}]%
   \ignorespaces
}{%
   \popQED\endtrivlist\@endpefalse
}
\newcommand{\CC}{\mathbb{C}}
\newcommand{\RR}{\mathbb{R}}
\newcommand{\NN}{\mathbb{N}}
\newcommand{\ZZ}{\mathbb{Z}}
\newcommand{\Zpl}{\ZZ_+}
\newcommand{\set}[1]{\underline{#1}}
\newcommand{\dd}{{\mathrm{d}}}
\newcommand{\ee}{{\mathrm{e}}}
\newcommand{\ii}{{\mathrm{i}}}
\newcommand{\pii}{{\text{\pi}}}
\newcommand{\abs}[1]{|#1|}                   
\newcommand{\ABS}[1]{\Bigl|#1\Bigr|}           
\newcommand{\card}[1]{|#1|}                 
\newcommand{\bfcdot}{{\boldsymbol{\cdot}}}
\newcommand{\EE}{\mathrm{E}} 
\newcommand{\floor}[1]{{\lfloor #1 \rfloor}} 
\newcommand{\Var}{\mathrm{Var}}
\newcommand{\newatop}[2]{\genfrac{}{}{0pt}{}{\scriptstyle #1}{\scriptstyle #2}}
\begin{document}
\title{On the accuracy in a combinatorial 
central limit theorem: the characteristic function method}
\author{
Bero Roos\footnote{
Postal address:
FB IV -- Dept.\ of Mathematics, 
University of Trier, 
54286 Trier, Germany. 
E-mail: \texttt{bero.roos@uni-trier.de}}\medskip\\
University of Trier 
\date{
}
}
\maketitle
\begin{abstract}
The aim of this paper is to present a new proof of an explicit version 
of the Berry-Ess\'{e}en type inequality of Bolthausen 
(Zeitschrift f\"ur Wahrscheinlichkeitstheorie und Verwandte 
Gebiete, 66, 379--386, 1984). The literature already provides 
several proofs of it using variants of 
Stein's method. The characteristic function method has also been 
applied but led only to weaker results. 
In this paper, we show how to overcome the difficulties of this 
method by using a new identity for permanents of complex matrices 
in combination with a recently proved inequality for the 
characteristic function of the approximated distribution.
\medskip\\
\textbf{Keywords:} 
approximation of permanents;
characteristic function method; 
combinatorial central limit theorem; 
permanental identity; 
sampling without replacement.
\medskip \\
\textbf{2020 Mathematics Subject Classification:}  
60F05;  
62E17.  
 
\end{abstract}
\section{Introduction and main result} \label{s86256}
The characteristic function method has shown to be very useful in 
the approximation of probability distributions, when enough information 
on the characteristic functions of the considered distributions is
available; for instance, the best constant to date in 
the Berry-Ess\'{e}en theorem for sums of independent real-valued random 
variables was obtained by \citet{Shevtsova2013}. 
In the present paper, we use the characteristic function method to 
give a new proof of an explicit version of the Berry-Ess\'{e}en type 
inequality of \citet{MR751577} in a combinatorial central limit 
theorem. Our main focus lies on the basic method while trying to obtain 
reasonable constants and to avoid complex reasoning. 
So we believe that the constants in our main
result (see Theorem \ref{t798326}) could be further improved in future 
by using refinements of the method. 
 
In what follows, we need some notation. 
Let $n\in\NN=\{1,2,3,\dots\}$ be a natural number with $n\geq2$, 
$A=(a_{j,r})\in\RR^{n\times n}$ be a real-valued $n\times n$ matrix,
and $\pi=(\pi(1),\dots,\pi(n))$ be a uniformly distributed random 
permutation of the set $\set{n}:=\{1,\dots,n\}$, that is 
$P(\pi=j)=\frac{1}{n!}$ for all 
$j\in\set{n}_{\neq}^n:=\{(j_1,\dots,j_n)\,|\,j_1,\dots,j_n\in\set{n}
\mbox{ pairwise distinct}\}$. 
Sometimes, we also write $(j(1),\dots,j(n))=(j_1,\dots,j_n)$.
Further, let  
\begin{align*}
S_n=\sum_{j=1}^na_{j,\pi(j)}.
\end{align*}
It is well-known that 
\begin{align}\label{e8625987}
\mu:=\EE S_n=na_{\bfcdot,\bfcdot}\quad \mbox{and}\quad
\sigma
:=(\Var S_n)^{1/2}
=\Bigl(\frac{1}{n-1}
\sum_{(j,r)\in\set{n}^2}\widetilde{a}_{j,r}^2\Bigr)^{1/2},
\end{align}
where $\set{n}^2=\{(j,r)\,|\,j,r\in\set{n}\}$ and
\begin{align*}
\widetilde{a}_{j,r}=a_{j,r}-a_{\bfcdot,r}-a_{j,\bfcdot}
+a_{\bfcdot,\bfcdot},\quad
a_{\bfcdot,r}=\frac{1}{n}\sum_{k=1}^na_{k,r}, \quad
a_{j,\bfcdot}=\frac{1}{n}\sum_{s=1}^na_{j,s}, \quad
a_{\bfcdot,\bfcdot}=\frac{1}{n^2}\sum_{(k,s)\in\set{n}^2}a_{k,s}
\end{align*}
for all $(j,r)\in\set{n}^2$, see \citet[Theorem 2]{MR44058}. 
We always assume that $\sigma^2>0$. 
In \citet[formula (89)]{MR2349578} 
(see also \citet[formula (4.106)]{MR2732624}), another formula for 
$\sigma^2$ can be found, which reads as follows: 
\begin{align}\label{e872897}
\sigma^2
=\frac{1}{4n^2(n-1)}
\sum_{(j,k)\in\set{n}_{\neq}^2}\sum_{(r,s)\in\set{n}_{\neq}^2}
b_{j,k,r,s}^2,
\end{align}
where 
$\set{n}_{\neq}^2=\{(j,r)\in\set{n}^2\,|\,j,r\mbox{ distinct}\}$
and
\begin{align}
b_{j,k,r,s}
&=a_{j,r}-a_{k,r}-a_{j,s}+a_{k,s}
=\widetilde{a}_{j,r}-\widetilde{a}_{k,r}-\widetilde{a}_{j,s}
+\widetilde{a}_{k,s}\label{e97257}
\end{align}
for all $j,k,r,s\in\set{n}$. 
For a slightly more general assertion, see \eqref{e9726578}, which is 
shown in Section~\ref{s7145876}. 
We note that $b_{j,k,r,s}=-b_{k,j,r,s}=-b_{j,k,s,r}$ 
for all $j,k,r,s\in\set{n}$ and that $b_{j,k,r,s}=0$ if $j=k$ or 
$r=s$. Furthermore,
\begin{align}\label{e687165}
\widetilde{a}_{j,r}
=\frac{1}{n^2}\sum_{(k,s)\in\set{n}^2}b_{j,k,r,s}
\quad\mbox{ for }(j,r)\in\set{n}^2. 
\end{align}

The statistic $S_n$ is of interest in the theory of rank tests,
see \citet{MR1680991}.
Central limit theorems for 
\begin{align}\label{e826653}
S_n^*:=\frac{S_n-\mu}{\sigma}
=\frac{1}{\sigma}\sum_{j=1}^n\widetilde{a}_{j,\pi(j)}
\end{align}
under various conditions have been proved by
\citet{MR11424},
\citet{MR44058},
\citet{MR89560}, and
\citet{MR130707}; 
see also the references therein.
The treatment of corresponding estimates of accuracy has been
more difficult. Two major approaches have been used here to bound 
the supremum distance between the corresponding distribution 
functions.
Methods using characteristic functions led to bounds of the right 
order only under additional conditions, which, however, we shall not 
discuss here in detail; but let us mention that,  while 
\citet{MR418187} combined a classical approach with combinatorial 
arguments such as the M\"obius inversion formula, 
\citet{MR0458711,MR550444} and \citet{MR672298} 
considered the case of $a_{j,r}=a'_{j}a''_r$, $(a'_j,a''_r\in\RR)$ 
and used an intermediate approximation by a sum of independent 
random variables. The principal difficulty here is to find a
sufficiently weak condition for the validity of a sufficiently good 
upper bound for the modulus of the difference of the characteristic 
function of $S_n^*$ and that of the standard normal law. 

On the other hand, variants of Stein's method have been used, e.g., 
see \citet{MR478291}, and the works cited below. 
The first satisfying result was established by
\citet{MR751577} using Stein's method together with an induction.
It says that 
\begin{align}\label{e2765776}
\Delta
:=\sup_{x\in\RR}\abs{P(S_n^*\leq x)-\Phi(x)}
\leq \frac{C_0}{n\sigma^3}\sum_{(j,r)\in\set{n}^2}
\abs{\widetilde{a}_{j,r}}^3.
\end{align}
where $\Phi$ is the distribution function of the standard normal 
distribution and $C_0$ is an absolute constant.  
The inequality in \eqref{e2765776} is of Lyapunov type, since 
the right-hand side can be written as 
$\frac{C_0}{\sigma^{3}}\sum_{j=1}^n\EE|\widetilde{a}_{j,\pi(j)}|^3$.
It has the order $n^{-1/2}$, 
if $\frac{1}{\sqrt{n}\sigma^3}\sum_{(j,r)\in\set{n}^2}
\abs{\widetilde{a}_{j,r}}^3$ can be estimated from above by an 
absolute constant. For instance, \eqref{e8625987} implies that this 
is true if two absolute 
constants $C_0',C_0''\in(0,\infty)$ exist such that 
$C_0'\leq \abs{\widetilde{a}_{j,r}}\leq C_0''$ for all
$(j,r)\in\set{n}^2$. 

Unfortunately, the constant $C_0$
in \eqref{e2765776} was not explicitly given in \cite{MR751577}.
Much later, employing the zero bias version of Stein's method
used in \citet{MR2157512}, an upper bound
of worse order than the one in \eqref{e2765776} but with an explicit 
constant was shown in \citet[Theorem 6.1, p.~168]{MR2732624}), that 
is, if $n\geq3$, then
\begin{align*} 
\Delta\leq\frac{ 16.3}{\sigma}\max_{(j,r)\in\set{n}^2}\abs{
\widetilde{a}_{j,r}}.
\end{align*}
In \citet{MR3322321}, Stein's method of exchangeable 
pairs and a concentration inequality was used to prove that 
$C_0\leq 451$. Using a zero bias version of Stein's method together 
with an induction, \citet{Thanh2013} showed that $C_0\leq90$. 

There is an improvement of \eqref{e2765776}.
Combining this inequality with a classical truncation technique,  
\citet{MR3199896} showed his Theorem 4, which, in the present 
situation, says that 
\begin{align}\label{e8259698}
\Delta
\leq \widetilde{C}_0\frac{n-1}{n\sigma^2}
\widetilde{\gamma}\Bigl(\frac{1}{\sigma}\Bigr),
\end{align}
where $\widetilde{C}_0=\max\{1709,50C_0+6\}$, 
$C_0$ is the constant in \eqref{e2765776}, and
\begin{align}\label{e872766}
\widetilde{\gamma}(x)
&=\frac{1}{n-1}\sum_{(j,r)\in\set{n}^2}
\widetilde{a}_{j,r}^2\min\{1,\abs{x\widetilde{a}_{j,r}}\}\quad
\mbox{ for }x\in\RR.
\end{align}
In view of the minimum term in \eqref{e872766}, we see that 
\eqref{e8259698} immediately implies Motoo's \cite{MR89560} 
combinatorial central limit theorem. 

It should be mentioned that several authors considered the more 
general situation, where
$A$ is replaced by a matrix of independent random variables 
being also independent of $\pi$
(e.g., see \cite{MR418187}, \cite{MR478291}, \cite{MR3322321}, and
\cite{MR3199896}). 
Further, estimates of higher accuracy have been shown
using an Edgeworth expansion, e.g.\ see \citet{MR696072} and
\citet{MR1015140}.
It seems that the method presented here can also 
be applied to these kinds of generalization.

Our main result is the next theorem, 
which is comparable to \eqref{e8259698} and is proved with 
the characteristic function method. We need further notation. Let 
\begin{align}\label{e78256}
\gamma(x)
&=\frac{1}{n^2(n-1)}
\sum_{(j,k)\in\set{n}_{\neq}^2}\sum_{(r,s)\in\set{n}_{\neq}^2}
b_{j,k,r,s}^2\min\{1,\abs{xb_{j,k,r,s}}\}
\quad\mbox{for }x\in\RR. 
\end{align}
\begin{theorem} \label{t798326}
Let $C_1=15.84$, $C_2=0.65$, that is, $C_1C_2\leq 10.3$. Then
\begin{align}\label{e71545}
\Delta
\leq \frac{C_1}{\sigma^2}\gamma\Bigl(\frac{C_2}{\sigma}\Bigr).
\end{align}
\end{theorem}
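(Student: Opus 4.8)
The plan is to use the smoothing inequality (Esseen's inequality) to bound $\Delta$ in terms of an integral of $|\widehat{F}_n(t)-\widehat{\Phi}(t)|/|t|$ over a finite range $|t|\leq T$, where $\widehat{F}_n$ is the characteristic function of $S_n^*$. So the first step is to write $\widehat{F}_n(t)=\EE\ee^{\ii t S_n^*}$ as a (normalized) permanent of the complex matrix with entries $\ee^{\ii t\widetilde{a}_{j,r}/\sigma}$, divided by $n!$. The key algebraic input — promised in the abstract as ``a new identity for permanents of complex matrices'' — must let us rewrite or expand this permanent in a form where the pairwise differences $b_{j,k,r,s}$ appear, matching the structure of $\gamma$ in \eqref{e78256}. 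I would expect this identity to express the permanent (or its logarithm, or its ratio to the permanent of the all-ones perturbation) as something like a product or sum over pairs $(j,k)$ and $(r,s)$ of factors built from $\ee^{\ii t b_{j,k,r,s}/(2\sigma)}$ or $\cos(t b_{j,k,r,s}/(2\sigma))$, exploiting the antisymmetry $b_{j,k,r,s}=-b_{k,j,r,s}=-b_{j,k,s,r}$ noted after \eqref{e97257}.

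Next I would estimate $|\widehat{F}_n(t)|$ from above. Using the permanental identity together with elementary bounds such as $|\ee^{\ii u}-1-\ii u|\leq u^2/2$ and $|\cos u|\leq \ee^{-u^2/2+\dots}$ (or $1-\cos u\geq c\min\{1,u^2\}$ type inequalities), one should obtain a bound of the form $|\widehat{F}_n(t)|\leq \exp(-c\, t^2 \cdot(\text{something}))$ valid for $|t|$ not too large, where the exponent involves sums of $b_{j,k,r,s}^2$, i.e.\ is comparable to $t^2$ by \eqref{e872897}. For the comparison with the Gaussian, I would write $\widehat{F}_n(t)-\ee^{-t^2/2}$ and control the difference by a third-moment-type quantity; here the truncation built into $\gamma$ (the $\min\{1,|x b_{j,k,r,s}|\}$ factor) is what replaces a pure third moment, so the elementary inequality to invoke is $|\ee^{\ii u}-1-\ii u+u^2/2|\leq \min\{u^2,|u|^3/6\}\leq u^2\min\{1,|u|\}$ or a variant thereof, applied with $u=tb_{j,k,r,s}/(2\sigma)$.

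The third ingredient is ``a recently proved inequality for the characteristic function of the approximated distribution'' — I read this as a sharp lower bound on $|\widehat{\Phi}|$... no, rather on the characteristic function associated with sampling without replacement (the hypergeometric-type distribution that appears as an intermediate object), or possibly a good bound for the characteristic function of $S_n^*$ itself obtained via a de-Poissonization / sampling-without-replacement representation. Concretely, $S_n^*$ can be related to a sum over a random subset, and one cites an existing inequality (likely of the author's) bounding that characteristic function. I would use this to get decay of $|\widehat{F}_n(t)|$ on the whole range $|t|\leq T$ with $T$ of order $\sigma/\max_{j,r}|b_{j,k,r,s}|$ or similar, so that the Esseen integral converges with the right constant.

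Finally I would assemble: split $\int_{-T}^{T}|\widehat{F}_n(t)-\ee^{-t^2/2}|\,\frac{\dd t}{|t|}$ into a ``small $t$'' part, handled by the Taylor/truncation estimate giving a contribution $\lesssim \sigma^{-2}\gamma(C_2/\sigma)$, and a ``large $t$'' part, handled by the exponential-decay bound from the permanental identity plus the sampling-without-replacement inequality, which contributes something also dominated by $\sigma^{-2}\gamma(C_2/\sigma)$ (using that on this range the $\min$ in $\gamma$ saturates). Optimizing the cutoff $T$ and tracking the numerical constants carefully yields $C_1=15.84$, $C_2=0.65$. The main obstacle, I expect, is establishing the permanental identity in a clean enough form and then extracting from it an upper bound for $|\widehat{F}_n(t)|$ that is simultaneously (i) uniform enough to cover the full Esseen range and (ii) tight enough in its constants; the interplay between that bound and the cited characteristic-function inequality — making sure their hypotheses are compatible and the resulting constants do not blow up — is where most of the real work will lie.
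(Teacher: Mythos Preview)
Your high-level picture is right --- Esseen smoothing plus a permanental identity plus a characteristic-function bound --- but the way you propose to combine them misses the paper's central mechanism, and your assembly is essentially the classical one the abstract says yields only weaker results.

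Concretely: the permanental identity (Proposition~\ref{p874976}) is \emph{not} a product or sum formula for the permanent itself. It is an exact integral representation of the \emph{difference}
\[
\frac{1}{n!}\sum_{r\in\set{n}_{\neq}^n}\ee^{c_r}-\ee^{\alpha+\beta/2}
=\frac{1}{n!}\int_0^1 f(u)\,\ee^{(1-u)\alpha+(1-u^2)\beta/2}\,\dd u,
\]
where $f(u)$ is a sum over $2$-, $3$-, and $4$-tuples of distinct indices. When you fix those indices, what remains inside $f(u)$ is a sum over permutations of the \emph{remaining} $n-2$, $n-3$, or $n-4$ indices --- i.e.\ a sub-permanent. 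The ``recently proved inequality'' (Lemma~\ref{l765245}, from \cite{Roos2019}) is then applied not to $\varphi$ itself but to these sub-permanents (Lemma~\ref{l287659}), yielding the factors $h_\ell(tu)$ in Proposition~\ref{p719876}. The result is a single bound on $\abs{\varphi(t)-\ee^{\ii t\mu-\sigma^2t^2/2}}$ that already carries Gaussian decay in $t$ (through $h_\ell$ and the $\ee^{-(1-u^2)\sigma^2t^2/2}$ factor), so no small-$t$/large-$t$ splitting of the Esseen integral is needed; one just integrates over $t\in[0,T]$ via Lemma~\ref{l28766}.

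Your plan --- bound $\abs{\widehat{F}_n(t)}$ and $\abs{\widehat{F}_n(t)-\ee^{-t^2/2}}$ separately, then split in $t$ --- is exactly the route of \cite{MR418187,MR0458711,MR550444,MR672298}, and the difficulty you flag at the end (getting a sufficiently good $\abs{\widehat{F}_n(t)}$ bound under weak enough hypotheses) is real and is precisely what blocked those authors from reaching \eqref{e2765776}. The paper sidesteps it by never needing a standalone bound on $\abs{\widehat{F}_n(t)}$: the decay is built into the integrand of the identity from the start, via the sub-permanent structure. That coupling of the identity to the characteristic-function bound on reduced matrices is the missing idea in your sketch.
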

For the comparison of \eqref{e71545} with  \eqref{e2765776}
and \eqref{e8259698}, we shall 
use the following lemma, which provides upper and lower bounds  of
$\gamma(x)$
and is proved in Section \ref{s7145876}. Here, we define
\begin{align*}
\delta
=\frac{1}{n^2(n-1)}
\sum_{(j,k)\in\set{n}_{\neq}^2}\sum_{(r,s)\in\set{n}_{\neq}^2}
\abs{b_{j,k,r,s}}^3.
\end{align*}

\begin{lemma}\label{l1876565}
Let $x\in\RR\setminus\{0\}$. For arbitrary $y\in(0,1)$, we then have 
\begin{align}
\Bigl(1-y^2\Bigl(\frac{n-1}{n}\Bigr)^2\Bigr)
\widetilde{\gamma}(xy)
&\leq\gamma(x)\leq 16 \widetilde{\gamma}(x),\label{e2976787}\\
4\Bigl(\sigma^2-\frac{n-1}{27x^2}\Bigr)
&\leq\gamma(x)
\leq \min\{4\sigma^2, \abs{x}\delta\}.\label{e78257}
\end{align}
\end{lemma}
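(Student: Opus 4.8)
The plan is to reduce everything to the two representations already in hand: the formula \eqref{e872897} for $\sigma^2$ as a double sum of $b_{j,k,r,s}^2$, the identity \eqref{e97257} expressing $b_{j,k,r,s}$ as an alternating sum of four $\widetilde{a}$'s, and the reconstruction formula \eqref{e687165} giving $\widetilde{a}_{j,r}$ as an average of the $b_{j,k,r,s}$. Throughout I write $t(u)=u^2\min\{1,|u|\}$ for the elementary function appearing in $\gamma$ and $\widetilde\gamma$, and I will use that $t$ is even, nondecreasing in $|u|$, subadditive-type in the sense that $t(u+v)\le C(t(u)+t(v))$ for a small explicit $C$, and satisfies $t(\lambda u)\le\max\{1,\lambda\}\,t(u)$ for $\lambda>0$ as well as $t(u)\le u^2$ and $t(u)\le|u|^3$. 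These one-variable facts are routine and I would dispatch them in a single paragraph.

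For the upper bound $\gamma(x)\le 16\widetilde\gamma(x)$ in \eqref{e2976787}: substitute $b_{j,k,r,s}=\widetilde a_{j,r}-\widetilde a_{k,r}-\widetilde a_{j,s}+\widetilde a_{k,s}$ into \eqref{e78256}, and apply the four-term inequality $t\bigl(\sum_{i=1}^4 c_i\bigr)\le 4^2\cdot\frac14\sum_{i=1}^4 t(c_i)$ — more carefully, $|c_1+c_2+c_3+c_4|^2\le 4\sum c_i^2$ and $\min\{1,|c_1+c_2+c_3+c_4|\}\le\sum\min\{1,|c_i|\}$, so $t(\sum c_i)\le 4\sum_i t(c_i)$ after using monotonicity to bound the product. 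Then each of the four resulting terms, after summing the redundant index out and using $\sum_{k\ne j}1=n-1$ twice against the $\frac{1}{n^2(n-1)}$ prefactor, contributes exactly $4\widetilde\gamma(x)$; the four together give $16\widetilde\gamma(x)$. The upper bounds in \eqref{e78257} are even quicker: $\min\{1,|xb|\}\le 1$ gives $\gamma(x)\le\frac{1}{n^2(n-1)}\sum\sum b_{j,k,r,s}^2=4\sigma^2$ by \eqref{e872897}, while $\min\{1,|xb|\}\le|xb|$ gives $\gamma(x)\le|x|\delta$ directly from the definition of $\delta$.

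The lower bounds are the subtler half and I expect the main obstacle there. For the left inequality in \eqref{e2976787} I would insert \eqref{e687165}, $\widetilde a_{j,r}=\frac{1}{n^2}\sum_{(k,s)}b_{j,k,r,s}$, into the definition of $\widetilde\gamma(xy)$; the issue is that $t$ is convex-like only in a limited range, so a naive Jensen step fails for the $\min$ part. The trick should be to split the average defining $\widetilde a_{j,r}$ into the "diagonal'' contributions ($k=j$ or $s=r$, which vanish) and the rest, write $t(xy\,\widetilde a_{j,r})$ via the elementary bound $t(\mathrm{avg})\le\mathrm{avg}\,t(\cdot)$ valid when the scaling keeps us controlled, and absorb the loss into the factor $1-y^2((n-1)/n)^2$; the quantity $((n-1)/n)^2$ is exactly the squared weight of the off-diagonal fraction $\frac{n(n-1)}{n^2}$, which is the bookkeeping one has to get right. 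For the left inequality in \eqref{e78257}, start from $\gamma(x)\ge\frac{1}{n^2(n-1)}\sum\sum\bigl(b^2-\tfrac{b^2}{1}\cdot\mathbf{1}_{|xb|>1}\cdot(1-\tfrac{1}{|xb|})\bigr)$ rewritten as $4\sigma^2$ minus an error; on $\{|xb|>1\}$ one has $b^2-b^2\min\{1,|xb|\}=b^2(1-|xb|)_+\le$ nothing useful directly, so instead use $b^2\min\{1,|xb|\}\ge b^2 - \frac{1}{x^2}\cdot\frac{(xb)^2}{1+|xb|}\ge b^2-\frac{|b|}{|x|}$ is still not tight enough; the clean route is the pointwise bound $u^2\min\{1,|u|\}\ge u^2-\frac{4}{27}$ for all real $u$ (the gap $u^2(1-|u|)$ on $|u|\le1$ is maximized at $|u|=2/3$ with value $4/27$), applied with $u=xb_{j,k,r,s}$, which yields $\gamma(x)\ge 4\sigma^2-\frac{4}{27x^2}\cdot\frac{\#\{(j,k,r,s)\}}{n^2(n-1)}$, and $\#\set{n}_{\neq}^2\cdot\#\set{n}_{\neq}^2=(n(n-1))^2$ makes the correction $\frac{4}{27x^2}\cdot\frac{(n(n-1))^2}{n^2(n-1)}=\frac{4(n-1)}{27x^2}$, i.e.\ exactly $4\bigl(\sigma^2-\frac{n-1}{27x^2}\bigr)$. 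The one genuinely delicate point is making the $t(\mathrm{avg})$ step in the first lower bound honest; everything else is careful combinatorial bookkeeping of the two index pairs against the $\frac{1}{n^2(n-1)}$ normalization.
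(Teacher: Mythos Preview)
Your treatment of \eqref{e78257} is fine and matches the paper: the pointwise bound $u^2\min\{1,|u|\}\ge u^2-\tfrac{4}{27}$ (equivalently Lemma~\ref{l375876}\ref{l375876.c}) together with \eqref{e872897} gives the lower bound, and the two trivial estimates on $\min\{1,|xb|\}$ give the upper bound.

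The upper bound $\gamma(x)\le 16\,\widetilde\gamma(x)$ in \eqref{e2976787}, however, does not follow from your argument. The scalar inequality you invoke, $t\bigl(\sum_{i=1}^4 c_i\bigr)\le 4\sum_i t(c_i)$ with $t(u)=u^2\min\{1,|u|\}$, is false: take $c_1=c_2=c_3=c_4=c$ with $|c|$ small, so that the left side is $64|c|^3$ while the right side is $16|c|^3$. The sharp constant in that inequality is $16$, not $4$, and with $16$ your bookkeeping yields $\gamma(x)\le 64\,\widetilde\gamma(x)$ rather than $16\,\widetilde\gamma(x)$. The paper gets the factor $16$ by a genuinely different two-step decomposition: it first splits only the second argument of $g(b,xb)=b^2\min\{1,|xb|\}$ via Lemma~\ref{l375876}\ref{l375876.a}, obtaining $n^2(n-1)\gamma(x)\le 4\sum g(b_{j,k,r,s},x\widetilde a_{j,r})$; then it expands $b^2$ in the first argument and uses that all cross terms vanish after summation (because $\sum_k\widetilde a_{k,r}=\sum_s\widetilde a_{j,s}=0$), followed by the rearrangement inequality of Lemma~\ref{l375876}\ref{l375876.b}. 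Keeping $b^2$ intact in the first step and exploiting the cancellation of cross terms is exactly what saves the extra factor of $4$ that your one-shot bound on $t$ loses.

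For the lower bound in \eqref{e2976787} your sketch is not yet a proof, and the indicated direction is problematic: a Jensen-type estimate $t(\text{average})\le\text{average of }t$ produces upper bounds on $\widetilde\gamma$, not the lower bound on $\gamma$ that is needed. The paper's argument is quite different from what you outline: starting from \eqref{e687165} one writes $\widetilde\gamma(x)=\frac{1}{n^2(n-1)}\sum b_{j,k,r,s}\,\widetilde a_{j,r}\min\{1,|x\widetilde a_{j,r}|\}$ and applies Cauchy--Schwarz to obtain $\widetilde\gamma(x)\le\frac{1}{n^2(n-1)}\sum b_{j,k,r,s}^2\min\{1,|x\widetilde a_{j,r}|\}$. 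This sum is then split according to whether $|b_{j,k,r,s}|\le y|\widetilde a_{j,r}|$ or not; on the first set one replaces $b^2$ by $y^2\widetilde a_{j,r}^2$ to get the $y^2((n-1)/n)^2\widetilde\gamma(x)$ term, and on the second set one replaces $|\widetilde a_{j,r}|$ by $|b|/y$ inside the minimum to get $\gamma(x/y)$. The factor $((n-1)/n)^2$ arises not from the off-diagonal weight in \eqref{e687165} as you suggest, but from the count $(n-1)^2$ of pairs $(k,s)$ against the prefactor $1/(n^2(n-1))$ after bounding $b^2$ by $y^2\widetilde a_{j,r}^2$.
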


\begin{remark} 
\begin{enumerate}

\item 
The inequalities in \eqref{e2976787} and the fact that 
$y\widetilde{\gamma}(x)\leq \widetilde{\gamma}(xy)$ for 
$y\in(0,1)$ imply that $\gamma(x)$ and $\widetilde{\gamma}(x)$ are 
of the same order. 

\item The constant $16$ on the right-hand side of \eqref{e2976787} is
optimal, as can be shown by considering the example $n=2$ and 
$A=\begin{psmallmatrix*}[r]
t&-t\\
-t&t
\end{psmallmatrix*}$ 
for $t\in\RR$, giving 
$\gamma(x)=16t^2\min\{1,\abs{4xt}\}$ and
$\widetilde{\gamma}(x)=4t^2\min\{1,\abs{xt}\}$.

\item 
From \eqref{e71545} and the second inequality in 
\eqref{e2976787}, we can derive an 
inequality of the form \eqref{e2765776} or \eqref{e8259698}. 
In fact, 
\begin{align}
\Delta\leq 
\frac{C_1}{\sigma^2}\gamma\Bigl(\frac{C_2}{\sigma}\Bigr)
\leq 16\frac{C_1}{\sigma^2}
\widetilde{\gamma}\Bigl(\frac{C_2}{\sigma}\Bigr)
\leq \frac{164.6}{(n-1)\sigma^3}
\sum_{(j,r)\in\set{n}^2}\abs{\widetilde{a}_{j,r}}^3.\label{e62796}
\end{align}
In particular, the constants in the second bound are better than 
those of \eqref{e8259698}. However, the constant $164.6$ 
in the third bound in \eqref{e62796} is somewhat large and cannot 
compete with Thanh's \cite{Thanh2013} constant $90$. 
On the other hand, the bounds in \eqref{e71545} and \cite{Thanh2013}
are not easily comparable because of the minimum term in 
\eqref{e78256}. 
We believe that the method of this paper could be refined to 
produce smaller constants. For instance, this could be achieved by 
replacing \eqref{e7165438} below by a more accurate smoothing 
lemma. The first two inequalities in \eqref{e62796} show that the 
characteristic function method can be used to give a proof of 
Motoo's \cite{MR89560} combinatorial central limit theorem. 
\end{enumerate}
\end{remark}

Let us now consider an example of sampling without replacement, where 
$S_n$ is the sum of $m\in\set{n}$ of real numbers $c_1,\dots,c_n$ 
drawn uniformly at random without replacement. 
\begin{corollary}\label{c2765} 
Let $c_1,\dots,c_n\in\RR$ and $m\in\set{n}$. 
Further, let $a_{j,r}=c_r$, if $(j,r)\in\set{m}\times \set{n}$, and 
$a_{j,r}=0$ otherwise. Then we have
$\mu
=mc_{\bfcdot}$ and
$\sigma^2
=\frac{m(n-m)}{n(n-1)}\sum_{r\in\set{n}}(c_r-c_{\bfcdot})^2$,
where $c_{\bfcdot}=\frac{1}{n}\sum_{r=1}^nc_r$.
Assuming that $\sigma^2>0$ as usual, \eqref{e71545} holds with
\begin{align}\label{e876245}
\Delta
\leq 
\frac{C_1}{\sigma^2}\gamma\Bigl(\frac{C_2}{\sigma}\Bigr)
&=\frac{2C_1m(n-m)}{n^2(n-1)\sigma^2}
\sum_{(r,s)\in\set{n}_{\neq}^2}(c_r-c_s)^2
\min\Bigl\{1,\frac{C_2}{\sigma}\abs{c_r-c_s}\Bigr\}.
\end{align}
\end{corollary}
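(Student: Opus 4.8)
The plan is to turn everything into a direct computation with the explicit matrix $A$ and then quote Theorem \ref{t798326}. Since $r$ always ranges over all of $\set{n}$, we may write $a_{j,r}=c_r\bbone\{j\leq m\}$. First I would compute the marginal means: $a_{\bfcdot,r}=\frac{m}{n}c_r$, $a_{j,\bfcdot}=c_{\bfcdot}\bbone\{j\leq m\}$ and $a_{\bfcdot,\bfcdot}=\frac{m}{n}c_{\bfcdot}$, which already gives $\mu=na_{\bfcdot,\bfcdot}=mc_{\bfcdot}$. Substituting these into the definition of $\widetilde a_{j,r}$ yields $\widetilde a_{j,r}=\frac{n-m}{n}(c_r-c_{\bfcdot})$ for $j\leq m$ and $\widetilde a_{j,r}=-\frac{m}{n}(c_r-c_{\bfcdot})$ for $j>m$. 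Summing the squares over $j$ produces the factor $m\bigl(\frac{n-m}{n}\bigr)^2+(n-m)\bigl(\frac{m}{n}\bigr)^2=\frac{m(n-m)}{n}$, so \eqref{e8625987} gives $\sigma^2=\frac{m(n-m)}{n(n-1)}\sum_{r\in\set{n}}(c_r-c_{\bfcdot})^2$, as asserted.

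For the estimate I would work from the first expression in \eqref{e97257}: $b_{j,k,r,s}=a_{j,r}-a_{k,r}-a_{j,s}+a_{k,s}=(c_r-c_s)\,\varepsilon_{j,k}$, where $\varepsilon_{j,k}:=\bbone\{j\leq m\}-\bbone\{k\leq m\}\in\{-1,0,1\}$. The crucial observation is that $\varepsilon_{j,k}$ is nonzero exactly when one of $j,k$ lies in $\set{m}$ and the other does not, which happens for precisely $2m(n-m)$ of the pairs $(j,k)\in\set{n}_{\neq}^2$, and that on these pairs $\varepsilon_{j,k}^2=1$, so $b_{j,k,r,s}^2=(c_r-c_s)^2$ and $\abs{xb_{j,k,r,s}}=\abs{x(c_r-c_s)}$ with no residual dependence on $j,k$; all other pairs contribute $0$ to \eqref{e78256}. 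Hence the inner $(j,k)$-summation merely multiplies by $2m(n-m)$, and
\[
\gamma(x)=\frac{2m(n-m)}{n^2(n-1)}\sum_{(r,s)\in\set{n}_{\neq}^2}(c_r-c_s)^2\min\{1,\abs{x(c_r-c_s)}\}.
\]
Taking $x=C_2/\sigma$ and multiplying by $C_1/\sigma^2$ gives exactly the right-hand side of \eqref{e876245}, and Theorem \ref{t798326} yields $\Delta\leq\frac{C_1}{\sigma^2}\gamma(C_2/\sigma)$, which completes the argument. (Alternatively, once $b_{j,k,r,s}$ has been computed one can read off $\sigma^2$ even more quickly from \eqref{e872897} together with the identity $\sum_{(r,s)\in\set{n}_{\neq}^2}(c_r-c_s)^2=2n\sum_{r\in\set{n}}(c_r-c_{\bfcdot})^2$.)

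There is no genuinely hard step; the proof is bookkeeping. The one point that warrants a little care is the collapse of the minimum term: one must check that on the pairs $(j,k)$ that actually contribute we have $\abs{\varepsilon_{j,k}}=1$ exactly (not just $\leq1$), so that $\min\{1,\abs{xb_{j,k,r,s}}\}$ really reduces to $\min\{1,\abs{x(c_r-c_s)}\}$; the degenerate summands with $c_r=c_s$ (in particular those with $r=s$) cause no problem since they vanish anyway. It is also worth double-checking the combinatorial count $2m(n-m)$ and noting that the hypothesis $\sigma^2>0$ excludes exactly the trivial cases $m=n$ and $c_1=\dots=c_n$.
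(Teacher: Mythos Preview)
Your proposal is correct. The paper does not spell out a proof of Corollary~\ref{c2765}; it is stated as an immediate specialisation of Theorem~\ref{t798326}, and your direct computation of $\mu$, $\sigma^2$, $b_{j,k,r,s}=(c_r-c_s)\varepsilon_{j,k}$, and the resulting collapse of $\gamma(x)$ is exactly the intended routine verification.
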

\begin{remark} 
Let the assumptions of Corollary \ref{c2765} hold. From 
\eqref{e876245}, one can derive a bound for $\Delta$, which has 
the same order as the bound in \citet[formula (1)]{MR545097}.
It should be mentioned that, in \cite{MR545097}, 
the variances of the two considered distributions differ slightly. 
However there are cases, where equal variances lead to a more precise
approximation, see \citet[Theorem 1.3]{MR3717995}. 
\end{remark}

The rest of the paper is structured as follows. 
The next section is devoted to the proof of Theorem \ref{t798326},
which is heavily based on an explicit inequality for the difference 
between the characteristic functions of the considered distributions 
(see Proposition \ref{p719876}) and
a variant of Ess\'{e}en's fundamental inequality 
(see \eqref{e7165438}). 
All needed auxiliary results will be proved in later sections. 
In particular, Section \ref{s5254} contains the proof of 
Proposition \ref{p719876}, which requires some preparations in form 
of a useful identity for permanents (see Proposition \ref{p874976} 
in Section \ref{s8257}), 
an upper bound for the characteristic function of $S_n$
(see Lemma \ref{l765245}), and other results. 
Section \ref{s7145876} is devoted to the remaining proofs. 

\section{Proof of Theorem \ref{t798326}}
Let the assumptions of Section \ref{s86256} hold. We proceed with 
the preparations for the proof of Theorem \ref{t798326}. Let 
$\varphi$ be the characteristic function of 
$S_n$, that is,
\begin{align*}
\varphi(t)=\EE\ee^{\ii t S_n}
\end{align*}
for $t\in\RR$. 
For a set $M$, let $\bbone_{M}(x)=1$, if $x\in M$, and 
$\bbone_{M}(x)=0$ otherwise. 

\begin{lemma} 
We have 
\begin{gather}
\kappa:=\sup_{x\in\RR} \frac{\cos(x)-1+x^2/2}{\abs{x}^3}
=0.09916191\dots ,\label{e982758}
\end{gather}
where the supremum is attained at the unique point $x_0=3.99589\dots$.
Here we set $\frac{0}{0}=0$. 
\end{lemma}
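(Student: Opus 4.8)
The plan is to establish the constant
\begin{align*}
\kappa=\sup_{x\in\RR}\frac{\cos(x)-1+x^2/2}{\abs{x}^3}
\end{align*}
by a direct calculus argument. First I would note that the function
$g(x)=\cos(x)-1+x^2/2$ is even and smooth, and that the Taylor
expansion $g(x)=x^4/24-\cdots$ shows $g(x)\geq0$ with
$g(x)/\abs{x}^3\to0$ as $x\to0$, so the value at $0$ (set to $0$ by
convention) is not the supremum; hence it suffices to work on
$(0,\infty)$ and maximize $h(x)=g(x)/x^3$ there. Since $h$ is
continuous on $(0,\infty)$, tends to $0$ as $x\to0^+$, and behaves
like $1/(2x)\to0$ as $x\to\infty$, the supremum is attained at an
interior critical point.

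Next I would compute $h'(x)$ and identify its zeros. Writing
$h(x)=x^{-3}g(x)$ we get $h'(x)=x^{-4}\bigl(x\,g'(x)-3g(x)\bigr)$ with
$g'(x)=-\sin(x)+x$, so the critical points solve
\begin{align*}
\psi(x):=x\bigl(x-\sin(x)\bigr)-3\Bigl(\cos(x)-1+\frac{x^2}{2}\Bigr)
=-x\sin(x)-3\cos(x)+3-\frac{x^2}{2}=0.
\end{align*}
I would then analyze $\psi$ on $(0,\infty)$: near $0$, the expansion
$\psi(x)=-x^4/8+\cdots$ is negative, and one checks that $\psi$ has a
sign change, with a first positive root $x_0$ in the stated vicinity
$3.99589\dots$. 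Uniqueness of the maximizer among the critical points
would be handled by locating all sign changes of $\psi$ (its
derivative $\psi'(x)=-\sin(x)-x\cos(x)+3\sin(x)-x=2\sin(x)-x\cos(x)-x$
can be used to count oscillations) and verifying that for $x$ past the
first root $h$ stays below $h(x_0)$; since $h(x)\leq g(x)/x^3$ and for
large $x$ one has the crude bound $g(x)\leq 1+x^2/2$ giving
$h(x)\leq(1+x^2/2)/x^3$, which is already less than the claimed value
$\approx0.0992$ once $x$ is moderately large, only a bounded interval
needs to be examined.

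Finally I would evaluate $h(x_0)$ numerically to the stated precision,
using the defining equation $x_0\sin(x_0)+3\cos(x_0)-3+x_0^2/2=0$ to
simplify: from it, $\cos(x_0)-1+x_0^2/6=-\tfrac{1}{3}x_0\sin(x_0)$, so
one can express $g(x_0)=\cos(x_0)-1+x_0^2/2$ in terms of $x_0$ and
$\sin(x_0)$ alone before dividing by $x_0^3$. The main obstacle is not
conceptual but the rigorous exclusion of other local maxima: one must
confirm that $\psi$ has exactly one zero in the range where $h$ could
conceivably exceed $h(x_0)$, which requires a careful (though
elementary) sign analysis of $\psi$ and $\psi'$ on an explicit bounded
interval, complemented by the tail estimate above to dispose of all
large $x$ at once.
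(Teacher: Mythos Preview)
The paper does not prove this lemma at all: it simply cites \cite{MR350809} (formula (12)) and moves on. Your direct calculus argument---restrict to $(0,\infty)$ by evenness, use the limits at $0$ and $\infty$ to force an interior maximum, and analyze the critical-point equation $\psi(x)=xg'(x)-3g(x)=0$---is the natural self-contained route and is essentially correct.

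One computational slip worth fixing: your Taylor expansion of $\psi$ near $0$ has the wrong sign. From $g(x)=x^4/24-x^6/720+\cdots$ and $g'(x)=x^3/6-\cdots$ one obtains
\[
\psi(x)=xg'(x)-3g(x)=\frac{x^4}{6}-\frac{3x^4}{24}+\cdots=\frac{x^4}{24}+\cdots,
\]
which is \emph{positive} for small $x>0$ (as it must be, since $h$ increases away from $0$). So the first sign change of $\psi$ is from positive to negative, not the reverse; this is precisely the first local maximum of $h$, at $x_0\approx 3.99589$. The rest of your plan---confining the search to a bounded interval via the tail bound $h(x)\leq (1+x^2/2)/x^3$ and checking that no other critical point in that interval gives a larger value---goes through unchanged.
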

The proof is given in \citet[formula (12)]{MR350809}. 
The next result is shown in Section \ref{s5254}.
\begin{proposition} \label{p719876}
Let $t\in\RR$,
\begin{align}\label{e29876898}
h_{\ell}(t)
=\min\Bigl\{1,\exp\Bigl(\ell-\frac{n-\ell-1}{4(n-1)}t^2
\Bigl(\sigma^2-\frac{1}{4}\gamma(2\kappa t)\Bigr)\Bigr)
\Bigr\}\bbone_{[\ell,\infty)}(n)
\end{align}
for $\ell\in[0,\infty)$, where $\kappa$ is given in \eqref{e982758}. 
Then  
\begin{align}
\begin{split}
\abs{\varphi(t)-\ee^{\ii t\mu-\sigma^2 t^2/2}}
&\leq \int_0^1 t^2u
\Bigl(\frac{1}{2}h_2(tu)\gamma\Bigl(\frac{tu}{4}\Bigr)
+\frac{2(n-2)}{n(n-1)}h_3(tu)\gamma\Bigl(\frac{tu}{2}\Bigr)\\
&\quad{}+
\frac{(n-2)(n-3)}{n(n-1)}h_4(tu)\gamma\Bigl(\frac{tu}{2}\Bigr)\Bigr)
\exp\Bigl(-(1-u^2)\frac{\sigma^2t^2}{2}\Bigr)\,\dd u.
\end{split}\label{e8616565}
\end{align}
\end{proposition}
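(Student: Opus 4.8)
The plan is to reduce~\eqref{e8616565} to a pointwise estimate on $\varphi'(s)-(\ii\mu-\sigma^2 s)\varphi(s)$ and then to prove that estimate by exploiting the permutation symmetry of $S_n$, organised through the permanental identity of Proposition~\ref{p874976} and the characteristic-function bound of Lemma~\ref{l765245}. First I would put $\psi(s)=\ee^{\ii s\mu-\sigma^2 s^2/2}$, which solves $\psi'=(\ii\mu-\sigma^2 s)\psi$ with $\psi(0)=1$; writing $D=\varphi-\psi$ one gets $D'-(\ii\mu-\sigma^2 s)D=g$ with $g(s)=\varphi'(s)-(\ii\mu-\sigma^2 s)\varphi(s)$ and $D(0)=0$, hence $D(t)=\psi(t)\int_0^t\psi(s)^{-1}g(s)\,\dd s$. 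Since $|\psi(t)/\psi(tu)|=\ee^{-(1-u^2)\sigma^2t^2/2}$ and $|t|\,|tu|=t^2u$ (as $u\geq0$), the substitution $s=tu$ shows that~\eqref{e8616565} follows once we prove
\begin{align}\label{e-plan-R}
\bigl|\varphi'(s)-(\ii\mu-\sigma^2 s)\varphi(s)\bigr|
\le|s|\Bigl(\tfrac{1}{2} h_2(s)\gamma\bigl(\tfrac{s}{4}\bigr)
+\tfrac{2(n-2)}{n(n-1)}h_3(s)\gamma\bigl(\tfrac{s}{2}\bigr)
+\tfrac{(n-2)(n-3)}{n(n-1)}h_4(s)\gamma\bigl(\tfrac{s}{2}\bigr)\Bigr)
\end{align}
for all $s\in\RR$. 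It is convenient to pass to $\chi(s)=\EE\,\ee^{\ii s(S_n-\mu)}$, so that the left side of~\eqref{e-plan-R} equals $|\chi'(s)+\sigma^2 s\,\chi(s)|$, and to record the basic symmetry: if $\pi'$ arises from $\pi$ by exchanging the values $\pi(j)$ and $\pi(k)$ (with $j\ne k$), then $\pi'$ is again uniform and $S_n(\pi')=S_n(\pi)-b_{j,k,\pi(j),\pi(k)}$.

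The next step is the key identity. The symmetry gives $\EE[\ee^{\ii s(S_n-\mu)}(1-\ee^{-\ii sb_{j,k,\pi(j),\pi(k)}})]=0$ for each fixed pair $(j,k)$. Averaging over all $(j,k)\in\set{n}_{\neq}^2$, expanding $1-\ee^{-\ii x}=\ii x+\tfrac{x^2}{2}-r(x)$ with $r(x)=\ee^{-\ii x}-1+\ii x+\tfrac{x^2}{2}$, and using $\sum_{(j,k)\in\set{n}_{\neq}^2}b_{j,k,\pi(j),\pi(k)}=2n(S_n-\mu)$ (which follows from $\sum_r\widetilde a_{j,r}=\sum_k\widetilde a_{k,r}=0$) together with $\sum_{(j,k)}\sum_{(r,s)}b_{j,k,r,s}^2=4n^2(n-1)\sigma^2$ from~\eqref{e872897}, the linear and quadratic pieces collapse and leave
\begin{align}\label{e-plan-id}
\chi'(s)+\sigma^2 s\,\chi(s)
=\frac{n-1}{2s}\,\mathrm{Rem}(s)
-\frac{s}{4n}\sum_{(j,k)\in\set{n}_{\neq}^2}
\mathrm{Cov}\bigl(\ee^{\ii s(S_n-\mu)},\,b_{j,k,\pi(j),\pi(k)}^2\bigr),
\end{align}
where $\mathrm{Rem}(s)=\tfrac{1}{n(n-1)}\sum_{(j,k)}\EE\bigl[\ee^{\ii s(S_n-\mu)}\,r(s\,b_{j,k,\pi(j),\pi(k)})\bigr]$. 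This is precisely the bookkeeping that Proposition~\ref{p874976} carries out at the level of permanents, the factors $\ee^{\ii sb_{j,k,r,s}}$ being the ratio of the two diagonal products of a $2\times2$ submatrix of $(\ee^{\ii s\widetilde a_{j,r}})$; using the permanental identity keeps the higher-order terms under control without messy combinatorics.

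Then I would estimate the two terms of~\eqref{e-plan-id}. For $\mathrm{Rem}(s)$, conditioning on $(\pi(j),\pi(k))=(r,s')$ turns the inner expectation into $\ee^{\ii s(\widetilde a_{j,r}+\widetilde a_{k,s'})}\,r(sb_{j,k,r,s'})$ times the characteristic function at $s$ of a combinatorial sum over the $(n-2)\times(n-2)$ submatrix obtained by deleting rows $j,k$ and columns $r,s'$; by Lemma~\ref{l765245} the modulus of the latter is at most $h_2(s)$, uniformly in $j,k,r,s'$. Combined with the elementary inequality $|r(x)|\le x^2\min\{1,|x|/4\}$ — which for small $|x|$ follows from $0\le\cos x-1+x^2/2\le\kappa|x|^3$ (see~\eqref{e982758}) and $|x-\sin x|\le|x|^3/6$, and for $|x|\ge4$ from the triangle inequality — and with $\sum_{(j,k)}\sum_{(r,s)}b_{j,k,r,s}^2\min\{1,|sb_{j,k,r,s}|/4\}=n^2(n-1)\gamma(s/4)$, this yields $|\mathrm{Rem}(s)|\le\frac{s^2}{n-1}h_2(s)\gamma(s/4)$, hence $\frac{n-1}{2|s|}|\mathrm{Rem}(s)|\le|s|\,\tfrac{1}{2}h_2(s)\gamma(s/4)$, i.e.\ the first term of~\eqref{e-plan-R}. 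The covariance term needs more, since a direct bound gives only an $h_0$- or $h_2$-sized factor, too weak for large $|s|$; I would iterate the device. Conditioning on $\pi$ restricted to $\set{n}\setminus\{j,k\}$ makes $b_{j,k,\pi(j),\pi(k)}^2$ measurable, and a second averaging over a further pair $(j',k')$ — using that exchanging the images of two indices \emph{outside} $\{j,k\}$ leaves $b_{j,k,\pi(j),\pi(k)}^2$ unchanged — again collapses the linear and quadratic pieces and produces a new remainder. The sub-permanents now occurring have size $n-4$ when $\{j',k'\}\cap\{j,k\}=\varnothing$ (there are $(n-2)(n-3)$ such ordered pairs, and Lemma~\ref{l765245} gives the uniform bound $h_4(s)$) and size $n-3$ when the pairs overlap in one index (about $2(n-2)$ ordered pairs, bound $h_3(s)$); the cruder increment estimate $|1-\ee^{-\ii sb}|\le\min\{2,|sb|\}=2\min\{1,|(s/2)b|\}$ that is affordable at this stage accounts for the argument $s/2$ in $\gamma$ and for the factor $2$ in the coefficients $\tfrac{2(n-2)}{n(n-1)}$ and $\tfrac{(n-2)(n-3)}{n(n-1)}$.

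Putting $s=tu$ in~\eqref{e-plan-R} and integrating over $u\in[0,1]$ then recovers~\eqref{e8616565}. The main obstacle is the covariance term: one must run the double transposition — equivalently, apply Proposition~\ref{p874976} twice — precisely enough that the three-way split and the exact multiplicities $\tfrac{1}{2}$, $\tfrac{2(n-2)}{n(n-1)}$, $\tfrac{(n-2)(n-3)}{n(n-1)}$ emerge, and one must verify that Lemma~\ref{l765245} remains valid after deleting up to four rows and columns with exactly the stated loss, namely that the drop in variance is absorbed into the $\tfrac{1}{4}\gamma(2\kappa s)$ inside $h_\ell$ and that replacing $n$ by $n-\ell$ in the Gaussian exponent costs only the factor $\ee^{\ell}$. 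A secondary point is to track the $\gamma(\cdot/4)$ versus $\gamma(\cdot/2)$ arguments without inflating constants; improving them, as remarked after the theorem, would call for a sharper smoothing step in place of the crude increment bounds used above.
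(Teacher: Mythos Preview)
Your route is essentially the paper's. The ODE reduction $D(t)=\psi(t)\int_0^t\psi(s)^{-1}g(s)\,\dd s$ is exactly the integral formula of Proposition~\ref{p874976} once one specialises $y_{j,r}=\ii t a_{j,r}$: indeed $\frac{1}{n!}f(u)=t\bigl(\varphi'(tu)-(\ii\mu-\sigma^2 tu)\varphi(tu)\bigr)$, so your \eqref{e-plan-R} is precisely the lemma that the paper proves as \eqref{e721456}--\eqref{e8976218}, with $f_1,f_2,f_3$ giving the three summands and Lemma~\ref{l287659} (derived from Lemma~\ref{l765245}) supplying the $h_\ell$ factors.

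Two points deserve sharpening. First, your heuristic for the covariance term (``exchange two indices \emph{outside} $\{j,k\}$'') is not quite what happens: the paper rewrites $\sigma^2$ via \eqref{e9726578} as an average of $b_{j,k,r(\ell),r(m)}^2$ over \emph{all} ordered pairs $(\ell,m)$, then compares with $b_{j,k,r(j),r(k)}^2$; the nonzero cases are $(\ell,m)$ having exactly one index in $\{j,k\}$ (four subcases, collapsing to the triple sum and $h_3$) or none (the quadruple sum and $h_4$), and the swap is performed at positions that do meet $\{j,k\}$, so the new $b$'s in the exponent always share a row index with the original pair. Your suggested swap at $(j',k')$ disjoint from $\{j,k\}$ would not collapse the linear piece, since $\sum_{(j',k')\in(\set{n}\setminus\{j,k\})_{\neq}^2}b_{j',k',\pi(j'),\pi(k')}$ is not a clean multiple of $S_n-\mu$.

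Second, and more substantively, after the increment bound you obtain sums of the form $\sum b_{j,k,p,q}^2\min\{1,\tfrac{|s|}{2}|b_{j,\ell,p,w}|\}$ with the square and the minimum sitting on \emph{different} index pairs; this is not yet $\gamma(s/2)$. The paper closes this via the elementary rearrangement $g(x,cy)+g(y,cx)\le g(x,cx)+g(y,cy)$ of Lemma~\ref{l375876}(\ref{l375876.b}), applied after symmetrising the sums. You flag ``tracking $\gamma(\cdot/2)$ without inflating constants'' as secondary, but it is the one step where a naive bound (Cauchy--Schwarz, say) would lose a factor and spoil the stated coefficients.
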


We note that $h_3(t)=0$ for $n=2$ and $h_4(t)=0$ for $n\in\{2,3\}$. 
Furthermore, \eqref{e78257} implies that 
$\sigma^2- \frac{1}{4}\gamma(2\kappa t)\geq0$ for all $t\in\RR$. 

\begin{remark} 
\begin{enumerate}

\item 
Let the assumptions of Proposition \ref{p719876} hold. Then
\eqref{e8616565} implies an inequality without integral, that is,
\begin{align}
\abs{\varphi(t)-\ee^{\ii t\mu-\sigma^2 t^2/2}}
&\leq \frac{t^2}{4}\gamma\Bigl(\frac{t}{6}\Bigr)h_2(t)
+\frac{(n-2)t^2}{n(n-1)}\gamma\Bigl(\frac{t}{3}\Bigr)h_3(t)
+\frac{(n-2)(n-3)t^2}{2n(n-1)}\gamma\Bigl(\frac{t}{3}\Bigr)h_4(t),
\label{e8126565}
\end{align} 
which can easily be shown by using that 
$\int_0^1u\gamma(xu)\,\dd u\leq \frac{1}{2}\gamma(\frac{2x}{3})$
for $x\in\RR$ and 
\begin{align*} 
h_\ell(tu)\exp\Bigl(-(1-u^2)\frac{\sigma^2t^2}{2}\Bigr)
&\leq h_\ell(t)\quad 
\mbox{ for }\ell\in[0,\infty)\mbox{ and } u\in[0,1].
\end{align*}
If $n\geq 6$, then \eqref{e8126565} implies that 
\begin{align}\label{e7682658}
\abs{\varphi(t)-\ee^{\ii t\mu-\sigma^2 t^2/2}}
\leq 32 t^2\gamma\Bigl( \frac{t}{3}\Bigr)
\exp\Bigl(-\frac{t^2}{20}\Bigl(\sigma^2-\frac{1}{4}\gamma(2\kappa t)
\Bigr)\Bigr).
\end{align}
The inequalities in \eqref{e8126565} or \eqref{e7682658} can be used 
to prove an inequality similar to that in Theorem \ref{t798326} but 
with worse constants. 

\item For $t\in\RR$,  
\begin{align*}
\varphi(t)
&=\frac{1}{n!}\sum_{r\in\set{n}_{\neq}^n}
\prod_{j=1}^n\exp(\ii ta_{j,r(j)}),
\end{align*}
is the  permanent of the matrix 
$(\exp(\ii ta_{j,r}))\in\CC^{n\times n}$ divided by $n!$. 
Therefore, inequalities \eqref{e8616565}--\eqref{e7682658}
are, in fact, explicit approximation inequalities 
for permanents with entries lying on the unit circle in the complex 
domain.
\end{enumerate}
\end{remark}
The proof of the following lemma is given in
Section~\ref{s7145876}.

\begin{lemma}\label{l28766}
Let $c\in(0,1/2)$, $\widetilde{c}=\sqrt{1-2c}$, $x\in\RR$, and 
$\gamma(x)$ be as in \eqref{e78256}. Then 
\begin{align}
\begin{split}
\lefteqn{\int_0^\infty\int_0^1tu\gamma(xtu)
\exp\Bigl(-ct^2u^2-(1-u^2)\frac{t^2}{2}\Bigr)\,\dd u\,\dd t}\\
&\leq \frac{-\log(2c)}{2\widetilde{c}^2} 
\gamma\Bigl(\frac{\sqrt{\pii}x}{-\log(2c)\sqrt{c}}
\Bigl(1-\frac{\sqrt{2c}}{\widetilde{c}}
\arcsin(\widetilde{c})\Bigr)\Bigr). 
\end{split}
\label{e218153}
\end{align}
\end{lemma}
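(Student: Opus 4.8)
The plan is to reduce the double integral on the left-hand side of \eqref{e218153} to a manageable one-dimensional expression by first integrating in $u$ and then in $t$, exploiting the fact that $\gamma$ is a nonnegative, nondecreasing function of $\abs{x}$ with $\gamma(0)=0$. First I would fix $t>0$ and look at the inner integral $\int_0^1 tu\,\gamma(xtu)\exp(-ct^2u^2-(1-u^2)t^2/2)\,\dd u$. The exponent equals $-\frac{t^2}{2}+ (\frac{1}{2}-c)t^2u^2 = -\frac{t^2}{2} + \frac{\widetilde{c}^2}{2}t^2u^2$, so the Gaussian part is $\exp(-t^2/2)\exp(\widetilde c^2 t^2 u^2/2)$ — this grows in $u$, which is why one cannot simply bound $\gamma(xtu)\le\gamma(xt)$ without care. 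Instead I would substitute $v=tu$, turning the inner integral into $\frac{1}{t}\ee^{-t^2/2}\int_0^t v\,\gamma(xv)\exp(\widetilde c^2 v^2/2)\,\dd v$. Then the whole double integral becomes $\int_0^\infty \ee^{-t^2/2}\bigl(\int_0^t v\gamma(xv)\ee^{\widetilde c^2 v^2/2}\,\dd v\bigr)\dd t$, and by Tonelli (everything is nonnegative) I would swap the order of integration to get $\int_0^\infty v\,\gamma(xv)\,\ee^{\widetilde c^2 v^2/2}\bigl(\int_v^\infty \ee^{-t^2/2}\,\dd t\bigr)\dd v$.

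Next I would estimate the inner Gaussian tail $\int_v^\infty \ee^{-t^2/2}\,\dd t$. The standard bound $\int_v^\infty \ee^{-t^2/2}\,\dd t\le \frac{1}{v}\ee^{-v^2/2}$ is too lossy near $v=0$; a cleaner choice is to use that $\int_v^\infty\ee^{-t^2/2}\,\dd t=\ee^{-v^2/2}\int_0^\infty \ee^{-vs-s^2/2}\,\dd s\le \ee^{-v^2/2}\int_0^\infty\ee^{-vs}\,\dd s=\frac{1}{v}\ee^{-v^2/2}$ for the tail, but combined with the other factor this would give $\int_0^\infty \gamma(xv)\,\ee^{-(1-\widetilde c^2)v^2/2}\,\dd v = \int_0^\infty \gamma(xv)\,\ee^{-c v^2}\,\dd v$ (since $1-\widetilde c^2=2c$). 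So after this step I am left with the task of bounding $\int_0^\infty \gamma(xv)\,\ee^{-cv^2}\,\dd v$ by the right-hand side of \eqref{e218153}. I would handle this last one-dimensional integral by the concavity/subadditivity properties of $\gamma$: writing $\gamma(xv)$ out via \eqref{e78256} as a sum of terms $b^2\min\{1,\abs{xvb}\}$, each summand contributes $b^2\int_0^\infty \min\{1,\abs{xb}v\}\,\ee^{-cv^2}\,\dd v$. Splitting the integral at $v_0 = 1/\abs{xb}$ and computing $\int_0^{v_0}\abs{xb}v\,\ee^{-cv^2}\,\dd v + \int_{v_0}^\infty \ee^{-cv^2}\,\dd v$ explicitly in terms of the error function gives a bound of the form $\abs{xb}\cdot(\text{something})$, and then re-summing over all $(j,k,r,s)$ and invoking once more that $\gamma$ is of the form $\sum b^2\min\{1,\abs{\cdot b}\}$ lets me repackage the result as $\frac{1}{\text{const}}\gamma(\text{const}\cdot x)$.

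The main obstacle — and the reason the stated constants look as specific as $\frac{-\log(2c)}{2\widetilde c^2}$ and $\frac{\sqrt{\pi}}{-\log(2c)\sqrt c}(1-\frac{\sqrt{2c}}{\widetilde c}\arcsin\widetilde c)$ — is getting the one-dimensional estimate for $\int_0^\infty \min\{1,\lambda v\}\,\ee^{-cv^2}\,\dd v$ (with $\lambda=\abs{xb}$) into exactly the form $\alpha\min\{1,\beta\lambda\}$ uniformly in $\lambda\in(0,\infty)$, with the optimal pair $(\alpha,\beta)$. For small $\lambda$ the integral behaves like $\lambda\int_0^\infty v\,\ee^{-cv^2}\,\dd v = \frac{\lambda}{2c}$, forcing $\alpha\beta=\frac{1}{2c}$; for large $\lambda$ it tends to $\int_0^\infty \ee^{-cv^2}\,\dd v=\frac{1}{2}\sqrt{\pi/c}$, forcing $\alpha=\frac{1}{2}\sqrt{\pi/c}$; and one then has to check that the ratio $\bigl(\int_0^\infty\min\{1,\lambda v\}\ee^{-cv^2}\,\dd v\bigr)/\min\{1,\beta\lambda\}$ is maximized at one of these two endpoints, i.e. that the function $\lambda\mapsto \int_0^\infty\min\{1,\lambda v\}\ee^{-cv^2}\,\dd v$ is concave in $\lambda$ (which it is, being an average of the concave functions $\lambda\mapsto\min\{1,\lambda v\}$). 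Where the $\arcsin$ and the $\log(2c)$ enter is in the precise value of the intermediate Gaussian-tail step; I suspect the author's route keeps the $u$-integral and the $t$-integral coupled a little longer than my clean split above and picks up $\arcsin\widetilde c$ from an integral of the form $\int_0^1 (1-\widetilde c^2 u^2)^{-1/2}\,\dd u$-type, and the factor $-\log(2c)$ from integrating $\ee^{-t^2/2}$ against a reciprocal over a truncated range. I would start from my clean decomposition and then, if the constants come out worse than claimed, track which inequality (the Gaussian tail bound or the concavity step) is lossy and tighten exactly that one to recover the $\arcsin$/$\log$ form. I expect the bookkeeping to be entirely elementary once the order of integration is swapped and $\gamma$'s piecewise-linear-in-each-summand structure is used; the only genuinely delicate point is the sharp constant in the scalar integral, and the paper's statement essentially dictates what that constant must be.
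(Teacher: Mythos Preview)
Your route is more complicated than necessary, and it contains both a computational slip and a genuinely lossy step that would not recover the stated constants.

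First the slip: after the substitution $v=tu$ you correctly write the inner integral as $\frac{1}{t}\ee^{-t^2/2}\int_0^t v\gamma(xv)\ee^{\widetilde c^2 v^2/2}\,\dd v$, but then you drop the factor $1/t$ when you write the full double integral. With the $1/t$ kept, swapping gives the inner tail $\int_v^\infty \frac{1}{t}\ee^{-t^2/2}\,\dd t$, not $\int_v^\infty \ee^{-t^2/2}\,\dd t$, and your subsequent tail estimate and the resulting one--dimensional integral $\int_0^\infty \gamma(xv)\ee^{-cv^2}\,\dd v$ are then off. Even if you fix this, the Mills--ratio bound you intend to use on the tail is lossy and will produce constants strictly worse than those in the statement; your ``endpoint'' constants $\alpha=\tfrac12\sqrt{\pii/c}$ and $\alpha\beta=\tfrac{1}{2c}$ are not the ones claimed.

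The paper's argument avoids all of this. It simply expands $\gamma(xtu)$ termwise via \eqref{e78256} and, for each term $b^2\min\{1,\abs{xtu\,b}\}$, uses the two trivial bounds $\min\{1,\abs{xtu\,b}\}\le 1$ and $\min\{1,\abs{xtu\,b}\}\le \abs{xb}\,tu$. This gives
\[
\int_0^\infty\!\!\int_0^1 tu\,\min\{1,\abs{xtu\,b}\}\,\ee^{-ct^2u^2-(1-u^2)t^2/2}\,\dd u\,\dd t
\;\le\;\min\{I_1,\abs{xb}\,I_2\},
\]
where $I_1=\int_0^1\!\int_0^\infty tu\,\ee^{-\frac{t^2}{2}(1-\widetilde c^2 u^2)}\,\dd t\,\dd u$ and $I_2=\int_0^1\!\int_0^\infty (tu)^2\,\ee^{-\frac{t^2}{2}(1-\widetilde c^2 u^2)}\,\dd t\,\dd u$ are computed \emph{exactly}: integrating in $t$ first leaves $\int_0^1\frac{u\,\dd u}{1-\widetilde c^2 u^2}=\frac{-\log(2c)}{2\widetilde c^2}$ for $I_1$, and $\sqrt{\pii/2}\int_0^1\frac{u^2\,\dd u}{(1-\widetilde c^2 u^2)^{3/2}}$ for $I_2$, which is where the $\arcsin(\widetilde c)$ appears. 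Finally one repackages $\sum b^2\min\{I_1,\abs{xb}I_2\}=I_1\sum b^2\min\{1,\abs{xb}I_2/I_1\}=I_1\cdot n^2(n-1)\gamma(xI_2/I_1)$. No Gaussian tail bound, no concavity argument, no swapping of integration order after a substitution---just two exact Gaussian moment integrals and the definition of $\gamma$.
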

\begin{proof}[Proof of Theorem \ref{t798326}] 
In view of \eqref{e826653}, without loss of generality we may assume 
that $\mu=0$ and $\sigma=1$. 
Let $C_4\in(0,\infty)$ and $m\in\NN$ such that $4\leq m$ and 
$\sqrt{\frac{m-1}{27}}<C_4$. 
In particular $0<27C_4^2-m+1$ and 
$C_3:=\frac{27C_4^2}{4(27C_4^2-m+1)}\in(0,\infty)$. 
Let $C_5,C_6\in(0,\infty)$ such that $C_5C_6\in(\frac{1}{4},\infty)$. 
First, we assume that $2\leq n\leq m$. 
Then the first inequality in \eqref{e78257} implies that 
\begin{align}\label{e2523465}
\Delta
\leq 1
\leq \frac{27C_4^2}{4(27C_4^2-n+1)}\gamma(C_4)
\leq C_3\gamma(C_4). 
\end{align}
Let us now assume that $n\geq m+1$.
Let $T=\frac{1}{C_5\gamma(2\kappa C_6)}$. If $T<C_6$ then 
\begin{align}
\Delta
\leq 1
\leq C_5C_6\gamma(2\kappa C_6). \label{e2523466}
\end{align}
We now assume that  $T\geq C_6$ and that $t\in[0,T]$. Then 
$\gamma(2\kappa tu)
\leq \frac{T}{C_6}\gamma(2\kappa C_6)
=\frac{1}{C_5C_6}$ for all $u\in[0,1]$.
We shall use Proposition \ref{p719876}, which gives
\begin{align*}
\abs{\varphi(t)-\ee^{-t^2/2}}
&\leq \int_0^1 t^2u
\Bigl(\frac{1}{2}h_2(tu)\gamma\Bigl(\frac{tu}{4}\Bigr)
+\frac{2(n-2)}{n(n-1)}h_3(tu)\gamma\Bigl(\frac{tu}{2}\Bigr)\\
&\quad{}+
\frac{(n-2)(n-3)}{n(n-1)}h_4(tu)\gamma\Bigl(\frac{tu}{2}\Bigr)\Bigr)
\exp\Bigl(-(1-u^2)\frac{t^2}{2}\Bigr)\,\dd u,
\end{align*}
where, for $\ell\in\{2,3,4\}$,  
\begin{align*}
h_{\ell}(tu)
&\leq \exp\Bigl(\ell-\frac{n-\ell-1}{4(n-1)}t^2u^2
\Bigl(1-\frac{1}{4}\gamma(2\kappa tu)\Bigr)\Bigr)
\leq \exp(\ell-\vartheta_\ell t^2u^2\} 
\end{align*}
and
$\vartheta_\ell
=\frac{m-\ell}{4m}(1-\frac{1}{4C_5C_6})$.
Let $\widetilde{\vartheta}_\ell=\sqrt{1-2\vartheta_\ell}$ and
\begin{align*}
D_\ell
&=\frac{\sqrt{\pii}}{-\log(2\vartheta_\ell)
\sqrt{\vartheta_\ell}}
\Bigl(1-\frac{\sqrt{2\vartheta_\ell}
}{\widetilde{\vartheta}_\ell}
\arcsin(\widetilde{\vartheta}_\ell)\Bigr).
\end{align*}
Using Lemma \ref{l28766}, we get 
\begin{align*}
\lefteqn{\int_0^T\int_0^1 \frac{tu}{2}h_2(tu)
\gamma\Bigl(\frac{tu}{4}\Bigr)
\exp\Bigl(-(1-u^2)\frac{t^2}{2}\Bigr)\,\dd u\,\dd t}\nonumber\\
&\leq \frac{\ee^2}{2}
\int_0^\infty\int_0^1 tu\gamma\Bigl(\frac{tu}{4}\Bigr)
\exp\Bigl(-\vartheta_2t^2u^2 
-(1-u^2)\frac{t^2}{2}\Bigr)\,\dd u\,\dd t\nonumber\\
&\leq \frac{\ee^2}{4}
\frac{(-\log(2\vartheta_2))}{\widetilde{\vartheta}_2^2} 
\gamma\Bigl(\frac{\sqrt{\pii}}{-4\log(2\vartheta_2)
\sqrt{\vartheta_2}}
\Bigl(1-\frac{\sqrt{2\vartheta_2}
}{\widetilde{\vartheta}_2}
\arcsin(\widetilde{\vartheta}_2)\Bigr)\Bigr)\nonumber\\
&=\frac{\ee^2}{4}
\frac{(-\log(2\vartheta_2))}{\widetilde{\vartheta}_2^2} 
\gamma\Bigl(\frac{D_2}{4}\Bigr), 
\end{align*}
\begin{align*}
\lefteqn{\int_0^T\int_0^1 tu
\frac{2(n-2)}{n(n-1)}h_3(tu)\gamma\Bigl(\frac{tu}{2}\Bigr)
\exp\Bigl(-(1-u^2)\frac{t^2}{2}\Bigr)\,\dd u\,\dd t}\nonumber\\
&\leq\frac{2\ee^3(m-1)}{m(m+1)}\int_0^\infty\int_0^1 tu
\gamma\Bigl(\frac{tu}{2}\Bigr)\exp\Bigl(-\vartheta_3t^2u^2
-(1-u^2)\frac{t^2}{2}\Bigr)\,\dd u\,\dd t\nonumber\\
&\leq \frac{2\ee^3(m-1)}{m(m+1)}
\frac{(-\log(2\vartheta_3))}{2\widetilde{\vartheta}_3^2} 
\gamma\Bigl(\frac{\sqrt{\pii}}{-2\log(2\vartheta_3)\sqrt{\vartheta_3}}
\Bigl(1-\frac{\sqrt{2\vartheta_3}}{\widetilde{\vartheta}_3}
\arcsin(\widetilde{\vartheta}_3)\Bigr)\Bigr)\nonumber\\
&=\frac{\ee^3(m-1)}{m(m+1)}
\frac{(-\log(2\vartheta_3))}{\widetilde{\vartheta}_3^2} 
\gamma\Bigl(\frac{D_3}{2}\Bigr),
\end{align*}
and
\begin{align*}
\lefteqn{\int_0^T\int_0^1 tu
\frac{(n-2)(n-3)}{n(n-1)}h_4(tu)\gamma\Bigl(\frac{tu}{2}\Bigr)
\exp\Bigl(-(1-u^2)\frac{t^2}{2}\Bigr)\,\dd u\,\dd t}\nonumber\\
&\leq \ee^4\int_0^\infty\int_0^1 tu\gamma\Bigl(\frac{tu}{2}\Bigr)
\exp\Bigl(-\vartheta_4t^2u^2
-(1-u^2)\frac{t^2}{2}\Bigr)\,\dd u\,\dd t\nonumber\\
&\leq \ee^4
\frac{(-\log(2\vartheta_4))}{2\widetilde{\vartheta}_4^2} 
\gamma\Bigl(\frac{\sqrt{\pii}}{-2\log(2\vartheta_4)
\sqrt{\vartheta_4}}
\Bigl(1-\frac{\sqrt{2\vartheta_4}}{\widetilde{\vartheta}_4}
\arcsin(\widetilde{\vartheta}_4)\Bigr)\Bigr)\nonumber\\
&=\ee^4
\frac{(-\log(2\vartheta_4))}{2\widetilde{\vartheta}_4^2} 
\gamma\Bigl(\frac{D_4}{2}\Bigr). 
\end{align*}
In view of the general Lemma 12.2 in \citet[p.~101--103]{MR3396213}, 
we see that the following variant of  Ess\'{e}en's 
\cite[Theorem 2a, p.\ 32]{MR0014626} smoothing inequality is valid:
\begin{align}\label{e7165438}
\Delta
&\leq \frac{1}{\pii w}\int_{0}^T
\frac{\abs{\varphi(t)-\ee^{-t^2/2}}}{t}\,\dd t
+\frac{(1+w)v(w)}{\sqrt{2\pii}wT},
\end{align}
where $w\in(0,1)$ is arbitrary and $v(w)\in(0,\infty)$ is defined 
by the equation 
\begin{align*}
\frac{1+w}{2}
=\frac{2}{\pii}\int_0^{v(w)}\frac{\sin^2(x)}{x^2}\,\dd x. 
\end{align*}
Combining the inequalities above together with the simple fact that
\begin{align*}
x_1\gamma(y_1)+x_2\gamma(y_2)
\leq (x_1+x_2)\gamma\Bigl(\frac{x_1y_1+x_2y_2}{x_1+x_2}\Bigr),
\end{align*}
for $x_1,x_2,y_1,y_2\in[0,\infty)$, we obtain 
\begin{align}
\Delta
&\leq \frac{\ee^2}{\pii w}
\frac{(-\log(2\vartheta_2))}{4\widetilde{\vartheta}_2^2} 
\gamma\Bigl(\frac{D_2}{4}\Bigr)
+\frac{\ee^3(m-1)}{\pii w m(m+1)}
\frac{(-\log(2\vartheta_3))}{\widetilde{\vartheta}_3^2} 
\gamma\Bigl(\frac{D_3}{2}\Bigr)
+\ee^4\frac{(-\log(2\vartheta_4))}{2\pii w\widetilde{\vartheta}_4^2} 
\gamma\Bigl(\frac{D_4}{2}\Bigr)\nonumber\\
&\quad{}+\frac{(1+w)v(w)}{\sqrt{2\pii}w}C_5\gamma(2\kappa C_6)
\nonumber\\
&\leq C_7\gamma\Bigl(\frac{C_8}{C_7}\Bigr), \label{e2523467}
\end{align}
where 
\begin{align*}
C_7
&=\frac{\ee^2}{\pii w}
\frac{(-\log(2\vartheta_2))}{4\widetilde{\vartheta}_2^2} 
+\frac{\ee^3(m-1)}{\pii w m(m+1)}
\frac{(-\log(2\vartheta_3))}{\widetilde{\vartheta}_3^2} 
+\ee^4\frac{(-\log(2\vartheta_4))}{2\pii w\widetilde{\vartheta}_4^2} 
+\frac{(1+w)v(w)}{\sqrt{2\pii}w}C_5\\
C_8
&=\frac{\ee^2}{\pii w}
\frac{(-\log(2\vartheta_2))}{4\widetilde{\vartheta}_2^2} 
\frac{D_2}{4}
+\frac{\ee^3(m-1)}{\pii w m(m+1)}
\frac{(-\log(2\vartheta_3))}{\widetilde{\vartheta}_3^2} 
\frac{D_3}{2}
+\ee^4\frac{(-\log(2\vartheta_4))}{2\pii w\widetilde{\vartheta}_4^2} 
\frac{D_4}{2}\\
&\quad{}+\frac{(1+w)v(w)}{\sqrt{2\pii}w}2\kappa C_5C_6.
\end{align*}
For $x_1,x_2,y_1,y_2\in[0,\infty)$, we have 
$\max\{x_1\gamma(y_1),x_2\gamma(y_2)\}
\leq \max\{x_1,x_2\}\gamma(\max\{y_1,y_2\})$.
From this  and the inequalities \eqref{e2523465}, 
\eqref{e2523466}, and \eqref{e2523467}, 
we derive in the general case $n\geq 2$ that
\begin{align*}
\Delta
\leq\max\Bigl\{C_3\gamma(C_4),C_5C_6\gamma(2\kappa C_6),
C_7\gamma\Bigl(\frac{C_8}{C_7}\Bigr)\Bigr\}
\leq C_1\gamma(C_2),
\end{align*}
where $C_1=\max\{C_3,C_5C_6,C_7\}$ and 
$C_2=\frac{1}{C_1}\max\{C_3C_4,2\kappa C_5C_6^2,C_8\}$. 
Letting $w=0.89$, $m=1367$, $C_4=7.915$, $C_5=0.047$, and $C_6=33$, 
we obtain $v(w)=5.329260\dots$, $C_3=1.2992\dots$, $C_1\leq15.84$, 
$C_2\leq0.65$. Now identify $C_1$ and $C_2$ with their upper bounds.
\end{proof}


\section{A useful identity for permanents}\label{s8257}
The proof of Proposition \ref{p719876} is based on the following 
proposition, which, for future reference, is presented here under 
slightly more general assumptions than needed. It contains an 
identity for the permanent of the matrix 
$(\ee^{y_{j,r}})\in\CC^{n\times n}$, that is,
the term $\sum_{r\in\set{n}_{\neq}^n}\ee^{c_r}$ in \eqref{e617865}
below.  

In what follows, let $n\in\NN\setminus\{1\}$, 
$Y=(y_{j,r})\in\CC^{n\times n}$, and
$z_{j,k,r,s}=y_{j,r}-y_{k,r}-y_{j,s}+y_{k,s}$
for all $j,k,r,s\in\set{n}$, 
\begin{align}\label{e861456}
c_r=\sum_{j\in\set{n}}y_{j,r(j)}
\quad\mbox{for} \quad r\in\set{n}_{\neq}^n, \quad
\alpha=\frac{1}{n}\sum_{j\in\set{n}}\sum_{r\in\set{n}}y_{j,r}, \quad
\beta=\frac{1}{n-1}
\sum_{(j,r)\in\set{n}^2}\widetilde{y}_{j,r}^2,
\end{align}
where, for $(j,r)\in\set{n}^2$, 
\begin{align*}
\widetilde{y}_{j,r}
=y_{j,r}-y_{\bfcdot,r}-y_{j,\bfcdot}+y_{\bfcdot,\bfcdot},\quad
y_{\bfcdot,r}
=\frac{1}{n}\sum_{k=1}^ny_{k,r}, \quad
y_{j,\bfcdot}
=\frac{1}{n}\sum_{s=1}^ny_{j,s}, \quad
y_{\bfcdot,\bfcdot}
=\frac{1}{n^2}\sum_{(k,s)\in\set{n}^2}y_{k,s}.
\end{align*}
\begin{proposition} \label{p874976}
Under the assumptions above, we have 
\begin{align}\label{e617865}
\frac{1}{n!}\sum_{r\in\set{n}_{\neq}^n}\ee^{c_r}
-\ee^{\alpha+\beta/2} 
=\frac{1}{n!}\int_0^1 f(u) \exp\Bigl((1-u)\alpha
+(1-u^2)\frac{\beta}{2}\Bigr)\,\dd u,
\end{align}
where  
\begin{align*}
f(u)&=\frac{f_1(u)}{4n}
+\frac{f_2(u)}{n^2(n-1)}+\frac{f_3(u)}{4n^2(n-1)},\\
f_1(u)
&=\sum_{(j,k)\in\set{n}_{\neq}^2}
\sum_{r\in\set{n}_{\neq}^n}z_{j,k,r(j),r(k)}
(1-uz_{j,k,r(j),r(k)}-\exp(-uz_{j,k,r(j),r(k)})) \ee^{uc_r},\\
f_2(u)
&=\sum_{(j,k,\ell)\in\set{n}_{\neq}^3}
\sum_{r\in\set{n}_{\neq}^n}uz_{j,k,r(j),r(k)}^2
(1-\exp(uz_{j,\ell,r(\ell),r(j)}))\ee^{uc_r},\\
f_3(u)
&=\sum_{(j,k,\ell,m)\in\set{n}_{\neq}^4}\sum_{r\in\set{n}_{\neq}^n}u
z_{j,k,r(j),r(k)}^2
(1-\exp(uz_{j,\ell,r(\ell),r(j)}+uz_{k,m,r(m),r(k)}))\ee^{uc_r}.
\end{align*}
\end{proposition}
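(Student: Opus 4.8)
The plan is to realise the left-hand side of \eqref{e617865} as the total increment of a one-parameter family and thereby reduce everything to a purely algebraic identity. Put
\[
G(u)=\frac{1}{n!}\sum_{r\in\set{n}_{\neq}^n}\ee^{uc_r}\exp\Bigl((1-u)\alpha+(1-u^2)\tfrac{\beta}{2}\Bigr),\qquad u\in[0,1].
\]
Since $\set{n}_{\neq}^n$ has exactly $n!$ elements, $G(0)=\ee^{\alpha+\beta/2}$ and $G(1)=\tfrac{1}{n!}\sum_r\ee^{c_r}$, so \eqref{e617865} amounts to $G(1)-G(0)=\int_0^1 G'(u)\,\dd u$ together with the identification of $G'$. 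Differentiating under the (finite) sum,
\[
G'(u)=\frac{1}{n!}\Bigl(\sum_{r\in\set{n}_{\neq}^n}(c_r-\alpha-u\beta)\,\ee^{uc_r}\Bigr)\exp\Bigl((1-u)\alpha+(1-u^2)\tfrac{\beta}{2}\Bigr),
\]
so the whole proposition reduces to the algebraic identity
\[
\sum_{r\in\set{n}_{\neq}^n}\bigl(\widetilde c_r-u\beta\bigr)\ee^{uc_r}=f(u),\qquad\text{where}\quad \widetilde c_r:=c_r-\alpha=\sum_{j=1}^n\widetilde y_{j,r(j)};
\]
here $c_r-\alpha=\sum_j\widetilde y_{j,r(j)}$ follows from $y_{j,s}=\widetilde y_{j,s}+y_{\bfcdot,s}+y_{j,\bfcdot}-y_{\bfcdot,\bfcdot}$ and the fact that $r$ runs through all values.

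The next step is to express $\widetilde c_r$ and $\beta$ through the $z_{j,k,s,t}$. Using $z_{j,k,s,t}=\widetilde y_{j,s}-\widetilde y_{k,s}-\widetilde y_{j,t}+\widetilde y_{k,t}$ (the analogue of the second equality in \eqref{e97257}) together with $\sum_s\widetilde y_{k,s}=0=\sum_j\widetilde y_{j,s}$ and bijectivity of $r$, summing $z_{j,k,r(j),r(k)}$ over $(j,k)\in\set{n}_{\neq}^2$ collapses to $2n\,\widetilde c_r$, i.e.
\[
\widetilde c_r=\frac{1}{2n}\sum_{(j,k)\in\set{n}_{\neq}^2}z_{j,k,r(j),r(k)}.
\]
Likewise the analogue of \eqref{e872897} gives $\beta=\tfrac{1}{4n^2(n-1)}\sum_{(j,k)\in\set{n}_{\neq}^2}\sum_{(s,t)\in\set{n}_{\neq}^2}z_{j,k,s,t}^2$, and substituting $s=r(p),\ t=r(q)$ (again legitimate because $r$ is a bijection),
\[
u\beta\,\ee^{uc_r}=\frac{u}{4n^2(n-1)}\sum_{(j,k)\in\set{n}_{\neq}^2}\sum_{(p,q)\in\set{n}_{\neq}^2}z_{j,k,r(p),r(q)}^2\,\ee^{uc_r}.
\]

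The mechanism producing the exponential factors inside $f_1,f_2,f_3$ is the substitution $r\mapsto r\circ\tau$, a bijection of $\set{n}_{\neq}^n$, for which one computes $c_{r\circ(j\,k)}=c_r-z_{j,k,r(j),r(k)}$, $c_{r\circ(j\,\ell)}=c_r+z_{j,\ell,r(\ell),r(j)}$, and, for two disjoint transpositions, $c_{r\circ(j\,\ell)(k\,m)}=c_r+z_{j,\ell,r(\ell),r(j)}+z_{k,m,r(m),r(k)}$. Applying $r\mapsto r\circ(j\,k)$ to $\sum_r z_{j,k,r(j),r(k)}\ee^{uc_r}$ and using $z_{j,k,r(k),r(j)}=-z_{j,k,r(j),r(k)}$ yields $\sum_r z_{j,k,r(j),r(k)}\ee^{uc_r}=\tfrac12\sum_r z_{j,k,r(j),r(k)}\bigl(1-\ee^{-uz_{j,k,r(j),r(k)}}\bigr)\ee^{uc_r}$; writing $z(1-\ee^{-uz})=z(1-uz-\ee^{-uz})+uz^2$ then gives $\sum_{r}\widetilde c_r\,\ee^{uc_r}=\tfrac{1}{4n}f_1(u)+\tfrac{u}{4n}\sum_{(j,k)\in\set{n}_{\neq}^2}\sum_r z_{j,k,r(j),r(k)}^2\,\ee^{uc_r}$. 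For the $\beta$-term I would split the inner $(p,q)$-sum according to whether $|\{p,q\}\cap\{j,k\}|$ equals $2$, $1$ or $0$. The case $|\cdot|=1$ brings in a third index $\ell$: after the reindexing $r\mapsto r\circ(k\,\ell)$ and relabelling $j\leftrightarrow k$ these terms become $-\tfrac{u}{n^2(n-1)}\sum_{(j,k,\ell)\in\set{n}_{\neq}^3}\sum_r z_{j,k,r(j),r(k)}^2\,\ee^{uz_{j,\ell,r(\ell),r(j)}}\ee^{uc_r}$, which is exactly the $\ee^{uz_{j,\ell,\cdots}}$-part of $\tfrac{1}{n^2(n-1)}f_2(u)$; the case $|\cdot|=0$ brings in two further indices $\ell,m$, and the reindexing $r\mapsto r\circ(j\,\ell)(k\,m)$ turns those terms into the $\exp(uz_{j,\ell,\cdots}+uz_{k,m,\cdots})$-part of $\tfrac{1}{4n^2(n-1)}f_3(u)$. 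Finally the "bare" terms — the leftover $\tfrac{u}{4n}\sum_{(j,k)}\sum_r z_{j,k,r(j),r(k)}^2\ee^{uc_r}$ above, the $|\cdot|=2$ contribution $-\tfrac{u}{2n^2(n-1)}\sum_{(j,k)}\sum_r z_{j,k,r(j),r(k)}^2\ee^{uc_r}$, and the $\ell$- and $(\ell,m)$-free parts of $f_2,f_3$ (which contribute $\tfrac{u(n-2)}{n^2(n-1)}$ and $\tfrac{u(n-2)(n-3)}{4n^2(n-1)}$ times $\sum_{(j,k)}\sum_r z_{j,k,r(j),r(k)}^2\ee^{uc_r}$) — must cancel, and this comes down to the elementary numeric identity $\tfrac{1}{4n}-\tfrac{1}{2n^2(n-1)}=\tfrac{n-2}{n^2(n-1)}+\tfrac{(n-2)(n-3)}{4n^2(n-1)}$, both sides equalling $\tfrac{(n-2)(n+1)}{4n^2(n-1)}$.

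The step I expect to be the main obstacle is this last assembly: keeping exact track of how many ordered tuples realise each coincidence pattern, choosing the correct transposition (or disjoint pair of transpositions) supported on the relevant indices so that each $z_{j,k,r(p),r(q)}^2$ is converted into $z_{j,k,r(j),r(k)}^2$ times $\ee^{uc_r}$ times precisely the exponential weight occurring in $f_2$ or $f_3$, and using the antisymmetries $z_{j,k,s,t}=-z_{k,j,s,t}=-z_{j,k,t,s}$ consistently so that the rational prefactors collapse to $\tfrac{1}{4n}$, $\tfrac{1}{n^2(n-1)}$ and $\tfrac{1}{4n^2(n-1)}$. Everything else is routine once the clean identity $\widetilde c_r=\tfrac{1}{2n}\sum_{(j,k)\in\set{n}_{\neq}^2}z_{j,k,r(j),r(k)}$ and the reindexing formulae for $c_{r\circ\tau}$ are in place.
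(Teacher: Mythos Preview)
Your approach is correct and essentially identical to the paper's: the paper likewise reduces \eqref{e617865} via the fundamental theorem of calculus to the algebraic identity $\sum_r(c_r-\alpha-u\beta)\ee^{uc_r}=f(u)$ (Lemma~\ref{l63285}), and then proves that identity by the same transposition-reindexing trick (Lemma~\ref{l7617655}, your $r\mapsto r\circ\tau$) together with the same case split on how the column pair $(p,q)$ meets $\{j,k\}$. The only cosmetic differences are that the paper derives your formula $\widetilde c_r=\tfrac{1}{2n}\sum_{(j,k)}z_{j,k,r(j),r(k)}$ in two averaging steps rather than directly, and it organises the bare $z^2$-terms by subtracting first ($T_6-T_5$) instead of checking your numeric identity $\tfrac{1}{4n}-\tfrac{1}{2n^2(n-1)}=\tfrac{n-2}{n^2(n-1)}+\tfrac{(n-2)(n-3)}{4n^2(n-1)}$ at the end.
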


We note that $f_2(u)=0$ for $n=2$, and $f_3(u)=0$ for $n\in\{2,3\}$.
Further, we have 
\begin{align}\label{e9726578}
\beta
=\frac{1}{4n^2(n-1)}\sum_{(j,k)\in\set{n}_{\neq}^2}
\sum_{(r,s)\in\set{n}_{\neq}^2}z_{j,k,r,s}^2,
\end{align}
the proof of which is provided in Section \ref{s7145876}. 
For the proof of Proposition \ref{p874976}, we need the next lemma. 
\begin{lemma}\label{l63285}
Let the assumptions of Proposition \ref{p874976} hold. 
For $u\in[0,1]$, we then have 
\begin{align}
\sum_{r\in\set{n}_{\neq}^n}(c_r-\alpha-u\beta)\ee^{uc_r}
&=f(u).
\label{e872860}
\end{align}
\end{lemma}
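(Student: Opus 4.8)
The plan is to establish \eqref{e872860} termwise: one rewrites its right-hand side $f(u)$ in the form $\sum_{r\in\set{n}_{\neq}^n}g_r(u)\ee^{uc_r}$ and checks that $g_r(u)=c_r-\alpha-u\beta$ for every permutation $r$. First I would reduce to the case $\alpha=0$ with all row and column sums of $Y$ equal to $0$: since $c_r-\alpha=\sum_{j\in\set{n}}\widetilde y_{j,r(j)}$, the numbers $z_{j,k,p,q}=y_{j,p}-y_{k,p}-y_{j,q}+y_{k,q}$ are unchanged when $Y$ is replaced by the centered matrix $(\widetilde y_{j,r})$, and $\beta$ depends on $Y$ only through $(\widetilde y_{j,r})$; dividing \eqref{e872860} by $\ee^{u\alpha}$ then reduces the general case to this normalized one, in which $y_{j,p}=\widetilde y_{j,p}$, $\sum_{p\in\set{n}}y_{j,p}=0=\sum_{j\in\set{n}}y_{j,p}$, and $c_r=\sum_{j\in\set{n}}y_{j,r(j)}$.

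The crucial device is an exponential-shift identity. For distinct positions $a,b$ let $r^{(ab)}\in\set{n}_{\neq}^n$ be obtained from $r$ by interchanging the values at $a$ and $b$, and for two disjoint such transpositions write $r^{(j\ell)(km)}=(r^{(j\ell)})^{(km)}$. A direct computation gives $c_{r^{(j\ell)}}=c_r+z_{j,\ell,r(\ell),r(j)}$ and $c_{r^{(j\ell)(km)}}=c_r+z_{j,\ell,r(\ell),r(j)}+z_{k,m,r(m),r(k)}$, hence
\begin{align*}
\ee^{uc_r}\ee^{-uz_{j,k,r(j),r(k)}}&=\ee^{uc_{r^{(jk)}}},\\
\ee^{uc_r}\ee^{uz_{j,\ell,r(\ell),r(j)}}&=\ee^{uc_{r^{(j\ell)}}},\\
\ee^{uc_r}\ee^{u(z_{j,\ell,r(\ell),r(j)}+z_{k,m,r(m),r(k)})}&=\ee^{uc_{r^{(j\ell)(km)}}}.
\end{align*}
Inserting these into the definitions of $f_1$, $f_2$, $f_3$ and re-indexing each sum over $r$ by the bijective involutions $r\mapsto r^{(jk)}$, $r\mapsto r^{(j\ell)}$, $r\mapsto r^{(j\ell)(km)}$ eliminates every exponential of a $z$ and gives
\begin{align*}
f_1(u)&=\sum_{(j,k)\in\set{n}_{\neq}^2}\sum_{r\in\set{n}_{\neq}^n}
\bigl(2z_{j,k,r(j),r(k)}-uz_{j,k,r(j),r(k)}^2\bigr)\ee^{uc_r},\\
f_2(u)&=u\sum_{(j,k,\ell)\in\set{n}_{\neq}^3}\sum_{r\in\set{n}_{\neq}^n}
\bigl(z_{j,k,r(j),r(k)}^2-z_{j,k,r(\ell),r(k)}^2\bigr)\ee^{uc_r},\\
f_3(u)&=u\sum_{(j,k,\ell,m)\in\set{n}_{\neq}^4}\sum_{r\in\set{n}_{\neq}^n}
\bigl(z_{j,k,r(j),r(k)}^2-z_{j,k,r(\ell),r(m)}^2\bigr)\ee^{uc_r}.
\end{align*}

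Next, I would fix $r$ and collapse the inner index-sums. By \eqref{e687165} (or a short computation using the vanishing row and column sums), $\sum_{(j,k)\in\set{n}_{\neq}^2}z_{j,k,r(j),r(k)}=2n(c_r-\alpha)$, so the $f_1$-part contributes $(c_r-\alpha)-\frac{u}{4n}Z_r$ to $g_r(u)$, where $Z_r:=\sum_{(j,k)\in\set{n}_{\neq}^2}z_{j,k,r(j),r(k)}^2$. For fixed distinct $j,k$, the value $r(\ell)$ runs over $\set{n}\setminus\{r(j),r(k)\}$ as $\ell$ runs over $\set{n}\setminus\{j,k\}$, and $(r(\ell),r(m))$ runs over the distinct pairs avoiding $\{r(j),r(k)\}$ as $(\ell,m)$ does; hence the $\ell$-sum in $f_2$ and the $(\ell,m)$-sum in $f_3$ collapse to $(n-1)Z_r-W_r$ and $(n-2)(n-3)Z_r-V_r$, with $W_r:=\sum_{(j,k)\in\set{n}_{\neq}^2}\sum_{p\in\set{n}}z_{j,k,p,r(k)}^2$ and $V_r$ the analogous sum whose inner part runs over distinct $p,q\in\set{n}\setminus\{r(j),r(k)\}$. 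Collecting the three contributions to $f(u)=\frac{f_1(u)}{4n}+\frac{f_2(u)}{n^2(n-1)}+\frac{f_3(u)}{4n^2(n-1)}$,
\begin{align*}
g_r(u)&=(c_r-\alpha)+\frac{u}{4n^2(n-1)}
\Bigl(\bigl(-n(n-1)+4(n-1)+(n-2)(n-3)\bigr)Z_r-4W_r-V_r\Bigr)\\
&=(c_r-\alpha)+\frac{u\,(2Z_r-4W_r-V_r)}{4n^2(n-1)},
\end{align*}
so the lemma becomes equivalent to the algebraic identity $4W_r+V_r-2Z_r=4n^2(n-1)\beta$ for every $r$.

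To prove that identity I would put $d_{j,k,p}:=y_{j,p}-y_{k,p}$, so $z_{j,k,p,q}=d_{j,k,p}-d_{j,k,q}$ and $\sum_{p\in\set{n}}d_{j,k,p}=0$. Expanding the squares and using this vanishing, $Z_r$, $W_r$, $V_r$ and $\beta$ all reduce to the three permutation invariants $G_r:=\sum_{(j,k)\in\set{n}_{\neq}^2}d_{j,k,r(k)}^2$ (equal to $\sum_{(j,k)\in\set{n}_{\neq}^2}d_{j,k,r(j)}^2$ by the symmetry $j\leftrightarrow k$), $L_r:=\sum_{(j,k)\in\set{n}_{\neq}^2}d_{j,k,r(j)}d_{j,k,r(k)}$ and $\mathcal E:=\sum_{(j,k)\in\set{n}_{\neq}^2}\sum_{p\in\set{n}}d_{j,k,p}^2$: one finds $Z_r=2(G_r-L_r)$, $W_r=\mathcal E+nG_r$, $4n^2(n-1)\beta=\sum_{(j,k)\in\set{n}_{\neq}^2}\sum_{(p,q)\in\set{n}_{\neq}^2}z_{j,k,p,q}^2=2n\mathcal E$ (the first equality is \eqref{e9726578}), and --- after the inclusion--exclusion isolating the distinct pairs $(p,q)$ that meet $\{r(j),r(k)\}$ --- $V_r=2n\mathcal E-\bigl((4n-4)G_r+4L_r+4\mathcal E\bigr)$. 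Substituting, the $G_r$-, $L_r$- and $\mathcal E$-terms combine to give $4W_r+V_r-2Z_r=2n\mathcal E=4n^2(n-1)\beta$; hence $g_r(u)=c_r-\alpha-u\beta$ and \eqref{e872860} follows. I expect the main obstacle to be the evaluation of $V_r$: one must enumerate carefully the distinct pairs $(p,q)$ with $p$ or $q$ in $\{r(j),r(k)\}$, compute the associated sums of $z_{j,k,p,q}^2$, and check that after summation over $(j,k)$ the outcome again lies in the span of $G_r$, $L_r$, $\mathcal E$; the whole argument rests on the fact that $Z_r$, $W_r$, $V_r$ and $4n^2(n-1)\beta$ are all expressible through those three invariants.
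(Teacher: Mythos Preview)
Your argument is correct, and the exponential–shift identity you isolate is exactly the content of the paper's Lemma~\ref{l7617655}. The routes, however, differ in direction and organization. The paper starts from the left-hand side, writes $T=T_4-T_5$, and by repeated application of Lemma~\ref{l7617655} together with symmetrizations $j\leftrightarrow k$, $r\leftrightarrow s$ successively produces the three pieces $\tfrac{T_1}{4n}$, $\tfrac{T_2}{n^2(n-1)}$, $\tfrac{T_3}{4n^2(n-1)}$; the identity \eqref{e9726578} enters to rewrite $\beta$ as a $z$-sum, and a case split on whether $\ell,m\in\{j,k\}$ decomposes $T_6-T_5$ into $T_7+T_8$. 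You run the same device in reverse: you first use the shift to strip the $\exp(\pm uz)$ factors from $f_1,f_2,f_3$, obtaining pure $z^2$-sums, and then verify the remaining identity coefficient-wise in $\ee^{uc_r}$ by reducing everything to the three invariants $G_r,L_r,\mathcal E$ of the variables $d_{j,k,p}$. Your reduction to the centered matrix replaces the paper's reduction to $u=1$; both are harmless normalizations. What your approach buys is a clean separation between the ``shift'' step and a purely algebraic endgame that does not need any further symmetrization tricks; the price is the inclusion--exclusion bookkeeping for $V_r$, which you flag correctly and which indeed comes out to $2(n-2)\mathcal E-4(n-1)G_r-4L_r$, matching your formula. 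The paper's approach, by contrast, never isolates a per-permutation identity, but its step-by-step symmetrizations make each transformation short.
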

\begin{proof}[Proof of Proposition \ref{p874976}]
Let $T$ denote the left-hand side of (\ref{e617865}). Then
\begin{align*}
T
&=\frac{1}{n!}\sum_{r\in\set{n}_{\neq}^n}
\exp\Bigl(uc_r+(1-u)\alpha
+(1-u^2)\frac{\beta}{2}\Bigr)\Big|_{u=0}^{u=1}\\
&=\frac{1}{n!}\int_0^1\sum_{r\in\set{n}_{\neq}^n}
(c_r-\alpha-u\beta)\ee^{uc_r}
\exp\Bigl((1-u)\alpha+(1-u^2)\frac{\beta}{2}\Bigr)\,\dd u.
\end{align*}
The assertion now follows by applying Lemma \ref{l63285}.
\end{proof}

In the proof of Lemma \ref{l63285}, we shall apply a technique 
similar to the one used in \citet{Roos2020}. 
We need the following lemma. 
\begin{lemma}\label{l7617655}
Let the assumptions of Proposition \ref{p874976} hold. Further, 
let $g:\,\set{n}_{\neq}^2\longrightarrow\CC$ be a function and 
$(j,k)\in\set{n}_{\neq}^2$. Then
\begin{align}\label{e175376}
\sum_{r\in\set{n}_{\neq}^n}g(r(j),r(k))\ee^{c_r}
=\sum_{r\in\set{n}_{\neq}^n}g(r(k),r(j))\exp(z_{j,k,r(k),r(j)})
\ee^{c_r}.
\end{align}
\end{lemma}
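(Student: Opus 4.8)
The plan is to prove \eqref{e175376} by a single change of summation variable: replace the permutation $r$ by its composition $r\circ\tau$ with the transposition $\tau$ that interchanges $j$ and $k$ and fixes every other element of $\set{n}$. Since $\tau$ is a fixed permutation, $r\mapsto r\circ\tau$ is a bijection of $\set{n}_{\neq}^n$ onto itself (indeed an involution), so a sum over all $r\in\set{n}_{\neq}^n$ is unchanged when $r$ is everywhere replaced by $r\circ\tau$. Everything then reduces to tracking how the two factors $g(r(j),r(k))$ and $\ee^{c_r}$ transform.

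First I would handle the argument of $g$: from $(r\circ\tau)(j)=r(\tau(j))=r(k)$ and $(r\circ\tau)(k)=r(j)$ we get $g\bigl((r\circ\tau)(j),(r\circ\tau)(k)\bigr)=g(r(k),r(j))$. Next I would handle the exponent. Splitting off the two summands with index $j$ and $k$ in $c_r=\sum_{i\in\set{n}}y_{i,r(i)}$ and using $(r\circ\tau)(i)=r(i)$ for $i\in\set{n}\setminus\{j,k\}$ gives
\begin{align*}
c_{r\circ\tau}
=\sum_{i\in\set{n}\setminus\{j,k\}}y_{i,r(i)}+y_{j,r(k)}+y_{k,r(j)}
=c_r-y_{j,r(j)}-y_{k,r(k)}+y_{j,r(k)}+y_{k,r(j)}.
\end{align*}
By the definition $z_{j,k,r,s}=y_{j,r}-y_{k,r}-y_{j,s}+y_{k,s}$, the last four terms are precisely $z_{j,k,r(k),r(j)}$, so $c_{r\circ\tau}=c_r+z_{j,k,r(k),r(j)}$.

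Combining the two observations and applying the bijection $r\mapsto r\circ\tau$ to the left-hand side of \eqref{e175376} yields
\begin{align*}
\sum_{r\in\set{n}_{\neq}^n}g(r(j),r(k))\ee^{c_r}
=\sum_{r\in\set{n}_{\neq}^n}g\bigl((r\circ\tau)(j),(r\circ\tau)(k)\bigr)\ee^{c_{r\circ\tau}}
=\sum_{r\in\set{n}_{\neq}^n}g(r(k),r(j))\exp(z_{j,k,r(k),r(j)})\ee^{c_r},
\end{align*}
which is exactly \eqref{e175376}. There is no genuine obstacle here — the proof is a one-line reindexing — and the only point requiring care is the bookkeeping of the four indices of $z$: one must verify that it is $z_{j,k,r(k),r(j)}$ (rather than, say, $z_{j,k,r(j),r(k)}$ or $z_{k,j,r(k),r(j)}$) that appears, which is a matter of matching signs via the antisymmetry $z_{j,k,r,s}=-z_{k,j,r,s}=-z_{j,k,s,r}$.
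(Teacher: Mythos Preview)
Your proof is correct and is essentially the same argument as the paper's: both reindex the sum by composing $r$ with the transposition swapping $j$ and $k$ (the paper phrases this as ``interchanging $r(j)$ with $r(k)$''), then split off the $j$- and $k$-summands of $c_r$ to identify the resulting shift as $z_{j,k,r(k),r(j)}$. Your presentation is slightly more explicit about the bijection, but the content is identical.
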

\begin{proof}
The assertion follows by interchanging $r(j)$ with $r(k)$
in \eqref{e175376}. In fact, 
\begin{align*}
\sum_{r\in\set{n}_{\neq}^n}g(r(j),r(k))\ee^{c_r}
&=\sum_{r\in\set{n}_{\neq}^n}g(r(j),r(k))
\exp(y_{j,r(j)}+y_{k,r(k)})
\exp\Bigl(\sum_{\ell\in\set{n}\setminus\{j,k\}}y_{\ell,r(\ell)}
\Bigr)\\
&=\sum_{r\in\set{n}_{\neq}^n}g(r(k),r(j))
\exp(y_{j,r(k)}+y_{k,r(j)})
\exp\Bigl(\sum_{\ell\in\set{n}\setminus\{j,k\}}y_{\ell,r(\ell)}
\Bigr),
\end{align*}
which is equal to the right-hand side of \eqref{e175376}. 
\end{proof}

\begin{proof}[Proof of Lemma \ref{l63285}]
By considering $uy_{j,r}$ instead of $y_{j,r}$ for 
$(j,r)\in\set{n}^2$,
without loss of generality, we may assume that $u=1$. 
Let $T_j=f_j(1)$ for $j\in\set{3}$. 
Let $T$ denote the left-hand side of 
(\ref{e872860}), i.e.\ $T=T_4-T_5$, where 
\begin{align*}
T_4=\sum_{r\in\set{n}_{\neq}^n}(c_r-\alpha)\ee^{c_r}, \qquad 
T_5=\sum_{r\in\set{n}_{\neq}^n}\beta\ee^{c_r}.
\end{align*}
For $r\in\set{n}_{\neq}^n$, we have 
$\alpha=\frac{1}{n}\sum_{j\in\set{n}}\sum_{k\in\set{n}}y_{j,r(k)}$ 
and hence, using Lemma \ref{l7617655}, 
\begin{align}
T_{4}
&=\sum_{j\in\set{n}}\sum_{r\in\set{n}_{\neq}^n}
\Bigl( y_{j,r(j)}-\frac{1}{n}\sum_{k\in\set{n}}y_{j,r(k)}\Bigr)
\ee^{c_r}
=\frac{1}{n}\sum_{(j,k)\in\set{n}_{\neq}^2}
\sum_{r\in\set{n}_{\neq}^n}(y_{j,r(j)}-y_{j,r(k)})\ee^{c_r}
\label{e77524365}\\
&=-\frac{1}{n}\sum_{(j,k)\in\set{n}_{\neq}^2}
\sum_{r\in\set{n}_{\neq}^n}(y_{j,r(j)}-y_{j,r(k)})
\exp(z_{j,k,r(k),r(j)})\ee^{c_r}.\label{e8625}
\end{align}
By adding the right-hand sides in \eqref{e77524365} 
and \eqref{e8625} and dividing by two, we get 
\begin{align}
T_{4}
&=\frac{1}{2n}\sum_{(j,k)\in\set{n}_{\neq}^2}
\sum_{r\in\set{n}_{\neq}^n}(y_{j,r(j)}-y_{j,r(k)})
(1-\exp(z_{j,k,r(k),r(j)}))\ee^{c_r}\label{e837696}\\
&=\frac{1}{2n}\sum_{(j,k)\in\set{n}_{\neq}^2}
\sum_{r\in\set{n}_{\neq}^n}(y_{k,r(k)}-y_{k,r(j)})
(1-\exp(z_{j,k,r(k),r(j)}))\ee^{c_r},\label{e837697}
\end{align}
where the last equality follows by interchanging $j$ with $k$
and using the fact that $z_{j,k,r,s}=-z_{k,j,r,s}=-z_{j,k,s,r}$
for all $j,k,r,s\in\set{n}$. 
Adding the right-hand sides of (\ref{e837696}) and (\ref{e837697})
and dividing by two, we obtain
\begin{align*}
T_{4}
&=\frac{1}{4n}\sum_{(j,k)\in\set{n}_{\neq}^2}
\sum_{r\in\set{n}_{\neq}^n}z_{j,k,r(j),r(k)}
(1-\exp(z_{j,k,r(k),r(j)}))\ee^{c_r}
=\frac{T_1}{4n}+T_{6},
\end{align*} 
where
\begin{align*}
T_{6}
&=\frac{1}{4n}\sum_{(j,k)\in\set{n}_{\neq}^2}
\sum_{r\in\set{n}_{\neq}^n}z_{j,k,r(j),r(k)}^2\ee^{c_r}. 
\end{align*}
Therefore $T=T_4-T_5=\frac{T_1}{4n}+T_6-T_5$. 
From \eqref{e9726578}, it follows that, for $r\in\set{n}_{\neq}^n$, 
\begin{align*}
4n^2(n-1)\beta
=\sum_{(j,k)\in\set{n}_{\neq}^2}
\sum_{(\ell,m)\in\set{n}_{\neq}^2}z_{j,k,r(\ell),r(m)}^2.
\end{align*}
This gives
\begin{align*}
4n^2(n-1)(T_6-T_5) 
&=n(n-1)\sum_{(j,k)\in\set{n}_{\neq}^2}
\sum_{r\in\set{n}_{\neq}^n}\Bigl(z_{j,k,r(j),r(k)}^2
-\frac{1}{n(n-1)}\sum_{(\ell,m)\in\set{n}_{\neq}^2}
z_{j,k,r(\ell),r(m)}^2\Bigr)\ee^{c_r}\\
&=\sum_{(j,k)\in\set{n}_{\neq}^2}\sum_{(\ell,m)\in\set{n}_{\neq}^2}
\sum_{r\in\set{n}_{\neq}^n}(z_{j,k,r(j),r(k)}^2
-z_{j,k,r(\ell),r(m)}^2)\ee^{c_r}.
\end{align*}
Considering the different cases of $\ell,m\in\{j,k\}$ or not, we 
obtain
\begin{align}
4n^2(n-1)(T_6-T_5) 
&=\sum_{(j,k)\in\set{n}_{\neq}^2}
\sum_{\ell=j, m\in\set{n}\setminus\{j,k\}}
\sum_{r\in\set{n}_{\neq}^n}(z_{j,k,r(j),r(k)}^2
-z_{j,k,r(j),r(m)}^2)\ee^{c_r}\nonumber\\
&\quad{}+\sum_{(j,k)\in\set{n}_{\neq}^2}
\sum_{\ell=k,m\in\set{n}\setminus\{j,k\}}
\sum_{r\in\set{n}_{\neq}^n}(z_{j,k,r(j),r(k)}^2
-z_{j,k,r(k),r(m)}^2)\ee^{c_r}\nonumber\\
&\quad{}+\sum_{(j,k)\in\set{n}_{\neq}^2}
\sum_{\ell\in\set{n}\setminus\{j,k\},m=j}
\sum_{r\in\set{n}_{\neq}^n}(z_{j,k,r(j),r(k)}^2
-z_{j,k,r(\ell),r(j)}^2)\ee^{c_r}\nonumber\\
&\quad{}+\sum_{(j,k)\in\set{n}_{\neq}^2}
\sum_{\ell\in\set{n}\setminus\{j,k\},m=k}
\sum_{r\in\set{n}_{\neq}^n}(z_{j,k,r(j),r(k)}^2
-z_{j,k,r(\ell),r(k)}^2)
\ee^{c_r}\nonumber\\
&\quad{}+\sum_{(j,k)\in\set{n}_{\neq}^2}
\sum_{(\ell,m)\in(\set{n}\setminus\{j,k\})_{\neq}^2}
\sum_{r\in\set{n}_{\neq}^n}(z_{j,k,r(j),r(k)}^2
-z_{j,k,r(\ell),r(m)}^2)\ee^{c_r}.\label{e75215}
\end{align}
We note that, $z^2_{j,k,r,s}=z^2_{k,j,r,s}=z^2_{j,k,s,r}$ for all
$j,k,r,s\in\set{n}$, so that, in the case $\{\ell,m\}=\{j,k\}$, the 
respective summand is zero. From \eqref{e75215}, we obtain that
$4n^2(n-1)(T_6-T_5)=T_7+T_8$, 
where 
\begin{align*}
T_{7}
&=4\sum_{(j,k,\ell)\in\set{n}_{\neq}^3}
\sum_{r\in\set{n}_{\neq}^n}(z_{j,k,r(j),r(k)}^2
-z_{j,k,r(j),r(\ell)}^2)\ee^{c_r}, \\
T_{8}
&=\sum_{(j,k,\ell,m)\in\set{n}_{\neq}^4}
\sum_{r\in\set{n}_{\neq}^n}(z_{j,k,r(j),r(k)}^2
-z_{j,k,r(\ell),r(m)}^2)\ee^{c_r}. 
\end{align*}
Using Lemma \ref{l7617655}, 
we find that 
\begin{align*}
T_7
&=4\sum_{(j,k,\ell)\in\set{n}_{\neq}^3}
\sum_{r\in\set{n}_{\neq}^n}(z_{j,k,r(j),r(k)}^2
-z_{j,k,r(j),r(k)}^2\exp(z_{k,\ell,r(\ell),r(k)}))\ee^{c_r}
=4T_2, \\
T_{8}
&=\sum_{(j,k,\ell,m)\in\set{n}_{\neq}^4}\sum_{r\in\set{n}_{\neq}^n}
(z_{j,k,r(j),r(k)}^2-z_{j,k,r(j),r(k)}^2
\exp(z_{j,\ell,r(\ell),r(j)}+z_{k,m,r(m),r(k)}))\ee^{c_r}
=T_3.
\end{align*}
Combining the identities above, we derive 
$T
=\frac{T_1}{4n}+\frac{1}{4n^2(n-1)}(T_7+T_8)
=\frac{T_1}{4n}
+\frac{1}{4n^2(n-1)}(4T_2+T_3)$, which implies the assertion. 
\end{proof}
\section{Auxiliary results and the proof of Proposition \ref{p719876}}
\label{s5254}
In what follows, let the assumptions of Section \ref{s86256} hold.
\begin{lemma} 
For $x\in\RR$ and $k\in\Zpl=\{0,1,2,\dots\}$, we have 
\begin{gather}
\ABS{\ee^{\ii x}-\sum_{j=0}^k\frac{(\ii x)^j}{j!}}
\leq2\frac{\abs{x}^k}{k!}
\min\Bigl\{1,\frac{\abs{x}}{2(k+1)}\Bigr\}.\label{e982759}
\end{gather}
\end{lemma}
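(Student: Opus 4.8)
The plan is to recognise \eqref{e982759} as the classical two-sided Taylor estimate for $\ee^{\ii x}$. Since $2\frac{\abs{x}^k}{k!}\cdot\frac{\abs{x}}{2(k+1)}=\frac{\abs{x}^{k+1}}{(k+1)!}$, the right-hand side of \eqref{e982759} equals $\min\bigl\{\frac{2\abs{x}^k}{k!},\frac{\abs{x}^{k+1}}{(k+1)!}\bigr\}$, so it is enough to establish both $\abs{g_k(x)}\leq\frac{2\abs{x}^k}{k!}$ and $\abs{g_k(x)}\leq\frac{\abs{x}^{k+1}}{(k+1)!}$, where $g_k(x):=\ee^{\ii x}-\sum_{j=0}^k\frac{(\ii x)^j}{j!}$. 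I would prove both bounds simultaneously by induction on $k$, using that termwise differentiation gives $g_k'(x)=\ii\,g_{k-1}(x)$ for $k\geq1$ and $g_0'(x)=\ii\ee^{\ii x}$, together with $g_k(0)=0$; hence $g_k(x)=\ii\int_0^x g_{k-1}(t)\,\dd t$ for $k\geq1$.

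For the base case $k=0$ we have $g_0(x)=\ee^{\ii x}-1=\ii\int_0^x\ee^{\ii t}\,\dd t$, so $\abs{g_0(x)}\leq\abs{x}$, and also $\abs{g_0(x)}\leq\abs{\ee^{\ii x}}+1=2$; thus $\abs{g_0(x)}\leq\min\{2,\abs{x}\}$, which is \eqref{e982759} for $k=0$.

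For the inductive step, fix $k\geq1$ and assume $\abs{g_{k-1}(t)}\leq\min\bigl\{\frac{2\abs{t}^{k-1}}{(k-1)!},\frac{\abs{t}^{k}}{k!}\bigr\}$ for all $t\in\RR$. Because this bound depends on $t$ only through $\abs{t}$, a substitution $t\mapsto-s$ handles $x<0$ in the same way as $x>0$, and $g_k(x)=\ii\int_0^x g_{k-1}(t)\,\dd t$ gives $\abs{g_k(x)}\leq\int_0^{\abs{x}}\abs{g_{k-1}(s)}\,\dd s$. Integrating the first of the two competing bounds for $\abs{g_{k-1}}$ yields $\abs{g_k(x)}\leq\int_0^{\abs{x}}\frac{2s^{k-1}}{(k-1)!}\,\dd s=\frac{2\abs{x}^{k}}{k!}$, and integrating the second yields $\abs{g_k(x)}\leq\int_0^{\abs{x}}\frac{s^{k}}{k!}\,\dd s=\frac{\abs{x}^{k+1}}{(k+1)!}$. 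Taking the minimum and rewriting it as $2\frac{\abs{x}^k}{k!}\min\{1,\frac{\abs{x}}{2(k+1)}\}$ completes the induction and the proof.

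There is no genuine obstacle in this argument; the only points needing a little care are the sign bookkeeping in the integral estimate when $x<0$ (resolved by the evenness of the inductive bound in $t$) and the elementary algebra that recombines the two power bounds into the single minimum appearing in \eqref{e982759}. An alternative, also routine, would be to use the closed remainder formula $g_k(x)=\frac{\ii^{k+1}}{k!}\int_0^x(x-t)^k\ee^{\ii t}\,\dd t$ (checked by differentiation and $g_k(0)=0$), bounding it directly for the $\frac{\abs{x}^{k+1}}{(k+1)!}$ estimate and, after one integration by parts, for the $\frac{2\abs{x}^k}{k!}$ estimate; I would nonetheless prefer the inductive route as the shortest self-contained one.
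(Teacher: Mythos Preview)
Your argument is correct. The paper does not actually prove this lemma; it simply cites \cite[Lemma~1, p.~295]{MR1476912} for a more general assertion. Your self-contained induction via $g_k(x)=\ii\int_0^x g_{k-1}(t)\,\dd t$ is the standard route and is carried out cleanly; the point about $x<0$ is handled either by the evenness of the inductive bound, as you do, or by the observation that $g_{k-1}(-t)=\overline{g_{k-1}(t)}$, which gives $\abs{g_{k-1}(-t)}=\abs{g_{k-1}(t)}$ directly. Either way the estimate $\abs{g_k(x)}\leq\int_0^{\abs{x}}\abs{g_{k-1}(s)}\,\dd s$ holds for all real $x$, and the rest is routine.
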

The proof of a more general assertion can be found in 
\citet[Lemma 1, p.~295]{MR1476912}. The following lemma is shown in 
\citet{Roos2019}. It complements Theorem 2.1 in \citet{MR719749}
under the present assumptions.  
Unlike the upper bound given in that theorem, 
our bound contains explicit constants and is valid for all $t\in\RR$.
Here, for $x\in\RR$, let $\floor{x}$ be the largest integer $\leq x$. 
\begin{lemma}\label{l765245}
Let $n\in\NN\setminus\{1\}$ and $t\in\RR$. Then  
\begin{align*}
\abs{\varphi(t)}
&=\frac{1}{n!}\ABS{\sum_{r\in\set{n}_{\neq}^n}
\exp\Bigl(\ii t\sum_{j\in\set{n}}a_{j,r(j)}\Bigr)}
\leq \Bigl(\frac{1}{n^2(n-1)^2}
\sum_{(j,k)\in \set{n}_{\neq}^2}\sum_{(r,s)\in \set{n}_{\neq}^2}
\cos^2\Bigl(\frac{t b_{j,k,r,s}}{2}\Bigr)\Bigr)^{\floor{n/2}/2}.
\end{align*}
\end{lemma}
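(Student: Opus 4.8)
The plan is to replace $\abs{\varphi(t)}$ by the expectation of a product of $m:=\floor{n/2}$ cosines attached to a random pairing of the rows, and then to bound that expectation by a power of $Q(t):=\frac{1}{n^2(n-1)^2}\sum_{(j,k)\in\set{n}_{\neq}^2}\sum_{(r,s)\in\set{n}_{\neq}^2}\cos^2(\tfrac12 tb_{j,k,r,s})$, i.e.\ to prove $\abs{\varphi(t)}\le Q(t)^{m/2}$.

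\emph{Step 1 (reduction to a cosine product).} Recall $\varphi(t)=\EE\ee^{\ii tS_n}$ with $\pi$ uniform. Fix any matching of $2m$ of the rows into pairs, say $\{1,2\},\dots,\{2m-1,2m\}$ (for $n$ odd, row $n$ is left unpaired). A uniform $\pi$ may be generated by first choosing, uniformly, the sequence of unordered column pairs $\bigl(\{\pi(2i-1),\pi(2i)\}\bigr)_{i=1}^m$ (which, together with the leftover value $\pi(n)$ when $n$ is odd, partitions $\set n$) and then choosing, independently and uniformly, an orientation for each of the $m$ pairs. Conditioning on that sequence, the orientations become i.i.d., and averaging the $i$th pair's contribution $\ee^{\ii t(a_{2i-1,\pi(2i-1)}+a_{2i,\pi(2i)})}$ over its two orientations produces, via $\ee^{\ii\alpha}+\ee^{\ii\beta}=2\ee^{\ii(\alpha+\beta)/2}\cos(\tfrac{\alpha-\beta}{2})$ together with the fact that the two orientations differ in the exponent by exactly $tb_{2i-1,2i,\pi(2i-1),\pi(2i)}$ (cf.\ the definition \eqref{e97257}), a factor $\ee^{\ii t\theta_i}\cos(\tfrac12 tb_{2i-1,2i,\pi(2i-1),\pi(2i)})$ with $\theta_i\in\RR$; the leftover row contributes a unimodular factor. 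Taking moduli and using $\abs{\EE[\,\cdot\,]}\le\EE\abs{\,\cdot\,}$,
\begin{align}\label{eq:sketch-prod}
\abs{\varphi(t)}\le\EE_\pi\Bigl[\,\prod_{i=1}^m\abs{\cos(\tfrac12 tb_{2i-1,2i,\pi(2i-1),\pi(2i)})}\,\Bigr],
\end{align}
and by relabelling rows this holds for any matching of $2m$ rows; equivalently, one obtains \eqref{eq:sketch-prod} by grouping the two orientations of each column pair in the permanent expansion of $n!\,\varphi(t)$ along the chosen row pairs.

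\emph{Step 2 (bounding the expectation).} For $m=1$ (so $n\in\{2,3\}$) one finishes immediately: Cauchy--Schwarz gives $\EE_\pi\abs{\cos(\tfrac12 tb_{j,k,\pi(j),\pi(k)})}\le\bigl(\frac{1}{n(n-1)}\sum_{(r,s)\in\set{n}_{\neq}^2}\cos^2(\tfrac12 tb_{j,k,r,s})\bigr)^{1/2}$, and since the mean over all ordered row pairs $(j,k)$ of this inner average equals $Q(t)$, taking the best row pair yields $\abs{\varphi(t)}\le Q(t)^{1/2}$. For $m\ge2$ I would induct on $n$, passing from $n$ to $n-2$: expanding $\varphi(t)$ (equivalently $n!\,\varphi(t)$) according to which two rows $p,q$ receive a fixed pair of columns $c_1,c_2$, and grouping the two ways of assigning $\{c_1,c_2\}$ to $\{p,q\}$, peels off a factor $\abs{\cos(\tfrac12 tb_{p,q,c_1,c_2})}$ and leaves the analogous character $\varphi^{(p,q)}(t)$ of the $(n-2)\times(n-2)$ submatrix obtained by deleting rows $p,q$ and columns $c_1,c_2$. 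Applying Cauchy--Schwarz to the average over $(p,q)$ and then the inductive bound $\abs{\varphi^{(p,q)}(t)}\le Q_{p,q}(t)^{\floor{(n-2)/2}/2}$ (with $Q_{p,q}$ the corresponding quantity for the submatrix) reduces matters to an inequality of the shape $\EE_{(p,q)}\bigl[Q_{p,q}(t)^{\floor{(n-2)/2}}\bigr]\le Q(t)^{\floor{(n-2)/2}}$.

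This last inequality is the heart of the matter and is the step I expect to be the main obstacle. It cannot be obtained from Jensen (the exponent $\floor{(n-2)/2}$ is $\ge2$ once $n\ge6$, so $\EE[(\,\cdot\,)^k]\ge(\EE[\,\cdot\,])^k$ points the wrong way), and the tempting shortcut through \eqref{eq:sketch-prod} --- bounding $\EE_\pi\prod_i\abs{\cos}$ by $\bigl(\EE_\pi\prod_i\cos^2\bigr)^{1/2}$ and hoping that $\EE_\pi\prod_i\cos^2(\tfrac12 tb_{2i-1,2i,\pi(2i-1),\pi(2i)})\le Q(t)^m$ --- actually fails (small $4\times4$ examples violate it), while generalised Hölder delivers only a weaker bound involving $m$th moments of $\abs{\cos}$ in place of the sharp second‑moment form $Q(t)^{m/2}$. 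Closing the induction therefore has to exploit the bilinear structure $b_{j,k,r,s}=(a_{j,r}-a_{k,r})-(a_{j,s}-a_{k,s})$, which constrains how the ``$\cos^2$-mass'' can be distributed over column pairs, together with a judicious choice of the columns $c_1,c_2$ to delete (so their removal does not inflate the relevant averages) and with keeping the expectations inside the products rather than splitting them prematurely. This is the part that needs genuine work; it is carried out in \citet{Roos2019}, and the reduction above already shows that the characteristic function method does reach a bound of this type.
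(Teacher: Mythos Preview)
The paper itself does not prove this lemma: it simply records the statement and cites \citet{Roos2019} for the proof. So there is no ``paper's own proof'' to compare against beyond that citation.

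Your Step~1 is correct and is exactly the standard symmetrization that underlies such bounds: pairing rows, averaging over the two orientations of each column pair, and extracting the cosine factor $\cos(\tfrac12 tb_{2i-1,2i,\pi(2i-1),\pi(2i)})$ is valid, and \eqref{eq:sketch-prod} follows. Your candid discussion in Step~2 is also accurate: Cauchy--Schwarz alone does not close the gap, Jensen points the wrong way for $m\ge2$, and the naive product bound $\EE\prod\cos^2\le Q^m$ can fail. You are right that the remaining work is nontrivial and is precisely what \citet{Roos2019} supplies; since the present paper also defers to that reference, your write-up is in fact \emph{more} informative than the paper's treatment, not less.

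That said, as a proof your proposal is explicitly incomplete by your own admission: the inductive inequality you isolate is the whole content of the lemma, and you do not establish it. If the goal is a self-contained argument, you would need to reproduce the relevant estimate from \citet{Roos2019}; if the goal is merely to match the paper, a one-line citation suffices. Either way, there is no error in what you wrote --- only an acknowledged gap that the paper also leaves to the external reference.
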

\begin{lemma}\label{l287659}
Let $L,M\subseteq\set{n}$ with $\card{L}=\card{M}=:\ell$ and let
$t\in\RR$. Then
\begin{align}\label{e9756}
\frac{1}{(n-\ell)!}\ABS{\sum_{r\in(\set{n}\setminus L)_{\neq}^{
\set{n}\setminus M}}\exp\Bigl(\ii t\sum_{j\in\set{n}\setminus M}
a_{j,r(j)}\Bigr)}
\leq h_\ell(t),
\end{align}
where $\sigma^2$, $\gamma(x)$ for $x\in\RR$, $\kappa$ and 
$h_{\ell}(t)$ are given as in \eqref{e872897}, \eqref{e78256}, 
\eqref{e982758}, and \eqref{e29876898}. 
\end{lemma}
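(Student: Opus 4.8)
The plan is to reduce the estimate to Lemma \ref{l765245} applied to a suitably modified combinatorial sum. The left-hand side of \eqref{e9756} is, up to the factor $\frac{1}{(n-\ell)!}$, the permanent of the $(n-\ell)\times(n-\ell)$ submatrix of $(\ee^{\ii ta_{j,r}})$ obtained by deleting the rows indexed by $M$ and the columns indexed by $L$. First I would introduce an explicit bijection between $\set{n}\setminus M$ and $\set{n'}:=\set{n-\ell}$ and between $\set{n}\setminus L$ and $\set{n-\ell}$, so that this submatrix becomes a genuine $(n-\ell)\times(n-\ell)$ matrix $A'=(a'_{j',r'})$; then Lemma \ref{l765245} (with $n$ replaced by $n-\ell$) gives
\begin{align*}
\frac{1}{(n-\ell)!}\ABS{\sum_{r\in(\set{n}\setminus L)_{\neq}^{\set{n}\setminus M}}\exp\Bigl(\ii t\sum_{j\in\set{n}\setminus M}a_{j,r(j)}\Bigr)}
\leq \Bigl(\frac{1}{(n-\ell)^2(n-\ell-1)^2}\sum_{(j,k)}\sum_{(r,s)}\cos^2\Bigl(\frac{tb'_{j,k,r,s}}{2}\Bigr)\Bigr)^{\floor{(n-\ell)/2}/2},
\end{align*}
where $b'$ is built from $A'$ exactly as $b$ is built from $A$, and the indices now range over the deleted-index sets. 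The crucial point is that the $b'$-differences are a \emph{subset} of the original $b$-differences: for $j,k\in\set{n}\setminus M$ and $r,s\in\set{n}\setminus L$ one has $b'_{j,k,r,s}=a_{j,r}-a_{k,r}-a_{j,s}+a_{k,s}=b_{j,k,r,s}$.

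The second step is to bound the right-hand side using the elementary inequality $\cos^2(x/2)=1-\sin^2(x/2)\le\exp(-\sin^2(x/2))$ together with a lower bound $\sin^2(x/2)\ge c\,x^2\min\{1,\abs{x}/c'\}$-type estimate derived from \eqref{e982758}; concretely, $1-\cos(x)=\frac{x^2}{2}-(\cos(x)-1+x^2/2)\ge\frac{x^2}{2}-\kappa\abs{x}^3$, and $1-\cos(x)\ge\frac14\min\{1,x^2\}$ or a similar clean bound, which combine to give something of the form $\cos^2(tb/2)\le\exp(-\text{const}\cdot b^2\min\{1,\abs{2\kappa t b}\})$. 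Averaging over the $(j,k,r,s)$ and recognising the definition \eqref{e78256} of $\gamma$, the averaged sum becomes $\le 1-\text{const}\cdot\bigl(\sigma^2-\tfrac14\gamma(2\kappa t)\bigr)+\dots$, and using $1-x\le\ee^{-x}$ one more time converts the outer exponent $\floor{(n-\ell)/2}/2$ into the linear exponent appearing in \eqref{e29876898}. The bookkeeping has to produce exactly the coefficient $\frac{n-\ell-1}{4(n-1)}$: the $\floor{(n-\ell)/2}/2\ge (n-\ell-1)/4$ accounts for the first factor, and rewriting the $(n-\ell)$-indexed average of $b^2\min\{1,\cdot\}$ in terms of the $n$-indexed $\gamma$ accounts for the passage from $n-\ell$ back to $n-1$ in the denominator; the normalisation constants $\frac{1}{(n-\ell)^2(n-\ell-1)^2}$ versus $\frac{1}{n^2(n-1)}$ in \eqref{e78256} have to be reconciled, and since the $b'$-sum is over fewer index tuples, the inequality goes the right way. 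Finally the trivial bound "everything is $\le h_\ell(t)$ because $h_\ell$ contains the $\min\{1,\cdot\}$" handles the regime where the exponent would be positive, i.e.\ the $\min\{1,\exp(\cdots)\}$ and the indicator $\bbone_{[\ell,\infty)}(n)$ in \eqref{e29876898} take care of small $n-\ell$ (in particular $n-\ell\le 1$, where the sum is empty or a single term and the left side is $\le 1$).

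The main obstacle I expect is the precise matching of constants in the second step: turning the crude $\cos^2(tb/2)\le\exp(-\cdots)$ estimate into an exponent that is \emph{exactly} $-\frac{n-\ell-1}{4(n-1)}t^2(\sigma^2-\tfrac14\gamma(2\kappa t))$ rather than merely of that order. This requires choosing the intermediate inequality for $1-\cos$ so that the quadratic part contributes $\sigma^2$ (via \eqref{e872897}/\eqref{e9726578}) and the cubic correction contributes exactly $\tfrac14\gamma(2\kappa t)$ (via \eqref{e982758} and \eqref{e78256}), with the factor $2\kappa$ inside $\gamma$ coming from writing $\kappa\abs{tb}^3=\tfrac14 (tb)^2\abs{2\kappa tb}$. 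Once that identity-level calibration is set up, the remaining steps ($1-x\le\ee^{-x}$, $\floor{(n-\ell)/2}\ge(n-\ell-1)/2$, and the reduction of the index set) are routine. A minor additional subtlety is ensuring the reduction from $A$ to $A'$ does not change the quantity $\sigma^2$ in the wrong direction; but since the $b'$ are literally a subfamily of the $b$, the averaged correction term for $A'$ is controlled by that for $A$, and monotonicity of $x\mapsto x^{\floor{(n-\ell)/2}/2}$ on $[0,1]$ finishes the argument.
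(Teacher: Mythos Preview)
Your overall strategy matches the paper's: reduce to Lemma~\ref{l765245} on the $(n-\ell)\times(n-\ell)$ submatrix, extend the $\cos^2$-sum back to the full index set, then convert to an exponential via $1+x\le\ee^x$ and identify $\sigma^2$ and $\gamma(2\kappa t)$ via \eqref{e872897} and \eqref{e78256}. The calibration you describe in the second step (writing $\cos(x)-1\le -\tfrac{x^2}{2}+\tfrac{x^2}{2}\min\{1,2\kappa|x|\}$) is exactly what the paper does.

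There is, however, a real gap in the bookkeeping you flag as a ``minor additional subtlety.'' When you enlarge the reduced sum $\sum_{(\set{n}\setminus M)_{\neq}^2\times(\set{n}\setminus L)_{\neq}^2}\cos^2(\cdot)$ to the full sum $\sum_{\set{n}_{\neq}^2\times\set{n}_{\neq}^2}\cos^2(\cdot)$, the normalisations do \emph{not} simply go the right way for free: the base of the power in Lemma~\ref{l765245} picks up the extra factor $\frac{n^2(n-1)^2}{(n-\ell)^2(n-\ell-1)^2}$, and this factor raised to $(n-\ell-1)/4$ is precisely the source of the additive $\ell$ in the exponent of $h_\ell$ in \eqref{e29876898}. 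You never mention this term. The paper handles it by the nontrivial estimate
\[
\Bigl(\frac{n(n-1)}{(n-\ell)(n-\ell-1)}\Bigr)^{(n-\ell-1)/2}\le \ee^\ell,
\]
which follows from $x\le\exp\bigl((x-1)/\sqrt{x}\bigr)$ for $x\ge1$. Without this step your argument would produce an exponent of the form $-\frac{n-\ell-1}{4(n-1)}t^2(\sigma^2-\tfrac14\gamma(2\kappa t))$ \emph{plus} an uncontrolled positive term, and the bound would not close to $h_\ell(t)$. Also note a small ordering issue: you cannot apply $\cos^2(x/2)\le\exp(-\sin^2(x/2))$ termwise and then average, since Jensen goes the wrong way; you must average $\cos^2$ first (as you in fact indicate later with ``the averaged sum becomes $\le 1-\text{const}\cdot(\cdots)$'') and then apply $1+x\le\ee^x$ once to the average.
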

\begin{proof}
Let $T$ denote the left-hand side of \eqref{e9756}. 
If $\ell\in\{n-1,n\}$, then $T=1=h_\ell(t)$ and hence 
\eqref{e9756} is valid. 
Let us now assume that $\ell\leq n-2$.
Let $p:\,\set{n-\ell}\longrightarrow\set{n}\setminus M$ and
$q:\,\set{n-\ell}\longrightarrow\set{n}\setminus L$ be two 
bijective maps. Clearly, we have 
\begin{align*}
T&=\frac{1}{(n-\ell)!}
\ABS{\sum_{r\in(\set{n-\ell})_{\neq}^{\set{n-\ell}}}
\exp\Bigl(\ii t\sum_{j\in\set{n-\ell}}a_{p(j),q(r(j))}\Bigr)}.
\end{align*}
Lemma~\ref{l765245} together with the inequality 
$\floor{\frac{x}{2}}\geq\frac{x-1}{2}$ for all integers $x$ gives 
\begin{align}
T
&\leq\Bigl(\frac{1}{(n-\ell)^2(n-\ell-1)^2}
\sum_{(j,k)\in(\set{n}\setminus M)_{\neq}^2}
\sum_{(r,s)\in(\set{n}\setminus L)_{\neq}^2}
\cos^2\Bigl(\frac{tb_{j,k,r,s}}{2}\Bigr)\Bigr)^{\floor{(n-\ell)/2}/2}
\nonumber\\
&\leq\Bigl(\frac{n^2(n-1)^2}{(n-\ell)^2(n-\ell-1)^2}
\frac{1}{n^2(n-1)^2}
\sum_{(j,k)\in\set{n}_{\neq}^2}\sum_{(r,s)\in\set{n}_{\neq}^2}
\cos^2\Bigl(\frac{tb_{j,k,r,s}}{2}\Bigr)\Bigr)^{(n-\ell-1)/4}.
\label{e45165}
\end{align}
Using the inequality $x\leq \exp(\frac{x-1}{\sqrt{x}})$ for 
$x\in[1,\infty)$ (see \citet[page 272, 3.6.15]{MR0274686}), 
we obtain that 
$\frac{n}{n-\ell}
\leq \exp(\frac{\ell}{\sqrt{n(n-\ell)}})$
and 
$\frac{n-1}{n-\ell-1}
\leq \exp(\frac{\ell}{\sqrt{(n-1)(n-\ell-1)}})$.
Therefore
\begin{align}
\Bigl(\frac{n(n-1)}{(n-\ell)(n-\ell-1)}\Bigr)^{(n-\ell-1)/2}
&\leq \exp\Bigl(\frac{n-\ell-1}{2}\Bigl(\frac{\ell}{\sqrt{n(n-\ell)}}+
\frac{\ell}{\sqrt{(n-1)(n-\ell-1)}}\Bigr)\Bigr)\nonumber\\
&\leq\exp\Bigl(\frac{\ell}{2}\Bigl(\sqrt{\frac{n-\ell-1}{n}}+
\sqrt{\frac{n-\ell-1}{n-1}}\Bigr)\Bigr)
\leq \ee^\ell.\label{e67812458} 
\end{align}
Using \eqref{e45165}, \eqref{e67812458}, the inequality 
$1+x\leq \ee^x$ for $x\in\RR$, the identity 
$\cos^2(x)=\frac{1}{2}(1+\cos(2x))$ for $x\in\RR$, 
and \eqref{e982758}, we get
\begin{align*}
T
&\leq\ee^\ell\Bigl(1+\frac{1}{n^2(n-1)^2}
\sum_{(j,k)\in \set{n}_{\neq}^2}\sum_{(r,s)\in \set{n}_{\neq}^2}
\Bigl(\cos^2\Bigl(\frac{tb_{j,k,r,s}}{2}\Bigr)-1\Bigr)
\Bigr)^{(n-\ell-1)/4}\\
&\leq \ee^\ell\exp\Bigl(\frac{n-\ell-1}{8n^2(n-1)^2}
\sum_{(j,k)\in \set{n}_{\neq}^2}\sum_{(r,s)\in \set{n}_{\neq}^2}
(\cos(tb_{j,k,r,s})-1)\Bigr)\\
&\leq \ee^\ell\exp\Bigl(\frac{n-\ell-1}{16n^2(n-1)^2}
\sum_{(j,k)\in \set{n}_{\neq}^2}\sum_{(r,s)\in \set{n}_{\neq}^2}
\bigl(-(tb_{j,k,r,s})^2+(tb_{j,k,r,s})^2
\min\{1,2\kappa\abs{tb_{j,k,r,s}}\}\bigr)\Bigr)\\
&=\exp\Bigl(\ell-\frac{n-\ell-1}{4(n-1)}t^2
\Bigl(\sigma^2-\frac{1}{4}\gamma(2\kappa t)\Bigr)\Bigr),
\end{align*}
which, together with the obvious relations $T\leq 1$
and $\bbone_{[\ell,\infty)}(n)=1$, implies the assertion. 
\end{proof}

\begin{lemma}\label{l375876}
For $x,y\in\RR$, let $g(x,y)=x^2\min\{1,\abs{y}\}$. Then,
for $c,x,y,z\in\RR$, we have
\begin{enumerate}[(a)]

\item \label{l375876.a}
$g(x,y+z)
\leq g(x,y)+g(x,z)\leq 2 g(x,\frac{\abs{y}+\abs{z}}{2})$,

\item \label{l375876.b} 
$g(x,cy)+g(y,cx)\leq g(x,cx)+g(y,cy)$,

\item \label{l375876.c} 
$x^2-\frac{4}{27c^2}\leq g(x,cx)$, if $c\neq0$.
\end{enumerate}
\end{lemma}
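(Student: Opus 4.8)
The plan is to treat the three parts independently, since each reduces to an elementary one-variable inequality after the right substitution. For part~\eqref{l375876.a}, I would first observe that $\min\{1,\abs{y+z}\}\leq\min\{1,\abs{y}+\abs{z}\}\leq\min\{1,\abs{y}\}+\min\{1,\abs{z}\}$; the first step is the triangle inequality, and the second is the subadditivity of $t\mapsto\min\{1,t\}$ on $[0,\infty)$ (which follows because this function is concave and vanishes at $0$). Multiplying through by $x^2\geq 0$ gives $g(x,y+z)\leq g(x,y)+g(x,z)$. For the second inequality in~\eqref{l375876.a}, I would again use concavity of $t\mapsto\min\{1,t\}$ in the form $\tfrac12\min\{1,\abs{y}\}+\tfrac12\min\{1,\abs{z}\}\leq\min\{1,\tfrac{\abs{y}+\abs{z}}{2}\}$ (Jensen), then multiply by $2x^2$.

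For part~\eqref{l375876.b}, I would write everything in terms of $p=\abs{x}$, $q=\abs{y}$, $d=\abs{c}$, so the claim becomes $p^2\min\{1,dq\}+q^2\min\{1,dp\}\leq p^2\min\{1,dp\}+q^2\min\{1,dq\}$, i.e.\ $(p^2-q^2)\bigl(\min\{1,dp\}-\min\{1,dq\}\bigr)\geq 0$. Since $t\mapsto\min\{1,dt\}$ is nondecreasing on $[0,\infty)$, the factor $\min\{1,dp\}-\min\{1,dq\}$ has the same sign as $p-q$, hence the same sign as $p^2-q^2$; the product is therefore nonnegative. (If $c=0$ both sides are $0$, so the inequality is trivial there too, which also covers the degenerate case not excluded in the statement.)

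For part~\eqref{l375876.c}, the inequality $x^2-\tfrac{4}{27c^2}\leq x^2\min\{1,\abs{cx}\}$ is clear when $\abs{cx}\geq 1$, since then the right side equals $x^2$ and $\tfrac{4}{27c^2}\geq 0$. When $\abs{cx}<1$ it reduces to $x^2-\abs{c}x^2\abs{x}\leq\tfrac{4}{27c^2}$, i.e.\ (putting $s=\abs{cx}\in[0,1]$ and multiplying by $c^2$) to $s^2-s^3\leq\tfrac{4}{27}$. The function $s\mapsto s^2-s^3$ on $[0,1]$ has derivative $2s-3s^2=s(2-3s)$, vanishing at $s=\tfrac23$, where the value is $\tfrac49-\tfrac{8}{27}=\tfrac{4}{27}$; this is the maximum, which gives the bound.

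The only mildly delicate point is the concavity/Jensen step in~\eqref{l375876.a}; everything else is monotonicity of $\min\{1,\cdot\}$ and a single-variable calculus extremum in~\eqref{l375876.c}. I anticipate no real obstacle.
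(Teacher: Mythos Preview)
Your proof is correct and matches the paper's approach essentially line for line: part~(\ref{l375876.a}) is what the paper dismisses as ``clear,'' and your argument for~(\ref{l375876.b}) is identical to the paper's computation $(x^2-y^2)(\min\{1,\abs{cx}\}-\min\{1,\abs{cy}\})\geq 0$. The only cosmetic difference is in~(\ref{l375876.c}), where you locate the maximum of $s^2-s^3$ on $[0,1]$ by calculus, while the paper instead exhibits the factorisation $\tfrac{4}{27c^2}-y+\abs{c}y^{3/2}=\tfrac{1}{27c^2}(3\abs{c}\sqrt{y}+1)(3\abs{c}\sqrt{y}-2)^2$ and substitutes $y=x^2\bbone_{[0,1]}(\abs{cx})$; the two are equivalent.
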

\begin{proof}
\begin{enumerate}[(a)]

\item This is clear. 

\item This is shown by using that
\begin{align*}
g(x,cx)+g(y,cy)-g(x,cy)-g(y,cx)
=(x^2-y^2)(\min\{1,\abs{cx}\}-\min\{1,\abs{cy}\})\geq0.
\end{align*}

\item Let $c\neq0$ and $y\in[0,\infty)$. Then
\begin{align*}
\frac{4}{27c^2}-y+\abs{c}y^{3/2}
=\frac{1}{27c^2}(3\abs{c}\sqrt{y}+1)(3\abs{c}\sqrt{y}-2)^2
\geq0. 
\end{align*}
For $x\in\RR$ and $y=x^2\bbone_{[0,1]}(\abs{cx})$, this implies 
\begin{align*}
x^2-\frac{4}{27c^2}
&\leq x^2-y+\abs{c}y^{3/2}
=x^2\bbone_{(1,\infty)}(\abs{cx})+\abs{c}\abs{x}^3
\bbone_{[0,1]}(\abs{cx})
=g(x,cx).\qedhere
\end{align*}
\end{enumerate}
\end{proof}

\begin{proof}[Proof of Proposition \ref{p719876}]
We shall use Proposition \ref{p874976} for the matrix $Y=(y_{j,r})$
with $y_{j,r}=\ii ta_{j,r}$ for $j,r\in\set{n}$. 
The quantities $z_{j,k,r,s}$, $c_r$, $\alpha$, and  $\beta$
used there are given by
$z_{j,k,r,s}
=\ii t b_{j,k,r,s}$ for all 
$j,k,r,s\in\set{n}$ and
\begin{align*} 
c_r
&=\ii td_r \quad\mbox{ with }
d_r
=\sum_{j\in\set{n}}a_{j,r(j)}
\quad\mbox{for }r\in\set{n}_{\neq}^n,\quad 
\alpha 
=\ii t\mu,\quad
\beta
=-\sigma^2t^2.
\end{align*}
Hence
\begin{align*}
\abs{\varphi(t)-\ee^{\ii t\mu-\sigma^2t^2/2}}
&=\ABS{\frac{1}{n!}\sum_{r\in\set{n}_{\neq}^n}\ee^{c_r}
-\ee^{\alpha+\beta/2}} 
\leq \frac{1}{n!}\int_0^1 \abs{f(u)} 
\exp\Bigl(-(1-u^2)\frac{\sigma^2t^2}{2}\Bigr)\,\dd u,
\end{align*}
where  
\begin{align*}
f(u)&=\frac{f_1(u)}{4n}
+\frac{f_2(u)}{n^2(n-1)}+\frac{f_3(u)}{4n^2(n-1)},\\
f_1(u)
&=\sum_{(j,k)\in\set{n}_{\neq}^2}
\sum_{r\in\set{n}_{\neq}^n}\ii t b_{j,k,r(j),r(k)}
(1-\ii tu b_{j,k,r(j),r(k)}
-\exp(-\ii tu b_{j,k,r(j),r(k)}))\ee^{\ii t ud_r},\\
f_2(u)
&=\sum_{(j,k,\ell)\in\set{n}_{\neq}^3}
\sum_{r\in\set{n}_{\neq}^n}t^2u b_{j,k,r(j),r(k)}^2
(\exp(\ii tu b_{j,\ell,r(\ell),r(j)})-1)\ee^{\ii t ud_r},\\
f_3(u)
&=\sum_{(j,k,\ell,m)\in\set{n}_{\neq}^4}\sum_{r\in\set{n}_{\neq}^n}
t^2ub_{j,k,r(j),r(k)}^2
(\exp(\ii tu(b_{j,\ell,r(\ell),r(j)}+b_{k,m,r(m),r(k)}))-1)
\ee^{\ii t ud_r}.
\end{align*}
The proof is completed with the help of the following lemma.
\end{proof}
\begin{lemma} 
Let the assumptions in the proof of Proposition \ref{p719876} hold. 
For $u\in[0,1]$, we have
\begin{align}
\frac{\abs{f_1(u)}}{n!4n}
&\leq \frac{t^2u}{2}h_2(tu)\gamma\Bigl(\frac{tu}{4}\Bigr),
\label{e721456}\\
\frac{\abs{f_2(u)}}{n!n^2(n-1)}
&\leq \frac{2(n-2)t^2u}{n(n-1)}h_3(tu)\gamma\Bigl(\frac{tu}{2}\Bigr),
\label{e78252}\\
\frac{\abs{f_3(u)}}{n!\,4n^2(n-1)}
&\leq \frac{(n-2)(n-3)t^2u}{n(n-1)}
h_4(tu)\gamma\Bigl(\frac{tu}{2}\Bigr).
\label{e8976218}
\end{align}
\end{lemma}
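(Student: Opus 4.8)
The plan is to establish \eqref{e721456}, \eqref{e78252} and \eqref{e8976218} by the same three-step scheme; fix $u\in[0,1]$ throughout.

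\emph{Step 1: reduction to a finite combinatorial sum.} In each of $f_1,f_2,f_3$ I would split the sum over $r\in\set{n}_{\neq}^n$ according to the values $r$ assigns to the ``free'' row indices — $\{j,k\}$ for $f_1$, $\{j,k,\ell\}$ for $f_2$, $\{j,k,\ell,m\}$ for $f_3$. For $f_1$, writing $p=r(j)$, $q=r(k)$ (necessarily $p\neq q$) and factoring $\ee^{\ii tud_r}=\ee^{\ii tu(a_{j,p}+a_{k,q})}\ee^{\ii tu\sum_{s\notin\{j,k\}}a_{s,r(s)}}$, everything inside the bracket becomes a function of the fixed quantity $b_{j,k,p,q}$ only, and the residual sum over bijections $\set{n}\setminus\{j,k\}\to\set{n}\setminus\{p,q\}$ is at most $(n-2)!\,h_2(tu)$ in modulus by Lemma~\ref{l287659} (with parameter $tu$ in place of $t$). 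Likewise the residual sums in $f_2$, $f_3$ are bounded by $(n-3)!\,h_3(tu)$ and $(n-4)!\,h_4(tu)$. (When $n$ is too small for these to be meaningful, $n=2$ for $f_2$ or $n\in\{2,3\}$ for $f_3$, the sum $f_i$ is empty and the claim is trivial.) This reduces each $\abs{f_i(u)}$ to $(n-i-1)!\,h_{i+1}(tu)$ times a finite sum over distinct free row indices and their (distinct) images.

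\emph{Step 2: pointwise estimate of the summands.} For $f_1$, the remaining factor $\abs{1-\ii tub_{j,k,p,q}-\exp(-\ii tub_{j,k,p,q})}$ is, by \eqref{e982759} with $k=1$, at most $2\abs{tub_{j,k,p,q}}\min\{1,\abs{tub_{j,k,p,q}}/4\}$; multiplying by the outer factor $\abs{tb_{j,k,p,q}}$ and by $\abs{\ee^{\ii tud_r}}=1$ bounds the summand by $2t^2u\,b_{j,k,p,q}^2\min\{1,\abs{(tu/4)b_{j,k,p,q}}\}$, that is, $2t^2u$ times a summand of $\gamma(tu/4)$ in \eqref{e78256}. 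For $f_2,f_3$ I would use $\abs{\ee^{\ii x}-1}\leq\min\{2,\abs{x}\}=2\min\{1,\abs{x}/2\}$ (the case $k=0$ of \eqref{e982759}), and for $f_3$ additionally split $\min\{2,\abs{tu(b'+b'')}\}$ via $g(x,y+z)\leq g(x,y)+g(x,z)$ (Lemma~\ref{l375876}(a)), with $g(x,y)=x^2\min\{1,\abs{y}\}$ and $b'=b_{j,\ell,r(\ell),r(j)}$, $b''=b_{k,m,r(m),r(k)}$. Writing $p=r(j)$, $q=r(k)$, $v=r(\ell)$, $w=r(m)$, this makes the $f_2$ summand $\leq 2t^2u\,g(b_{j,k,p,q},(tu/2)b_{j,\ell,v,p})$ and the $f_3$ summand $\leq 2t^2u\bigl(g(b_{j,k,p,q},(tu/2)b_{j,\ell,v,p})+g(b_{j,k,p,q},(tu/2)b_{k,m,w,q})\bigr)$.

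\emph{Step 3: symmetrization and counting.} For $f_1$ this finishes the proof: summing $2t^2u\,b_{j,k,p,q}^2\min\{1,\abs{(tu/4)b_{j,k,p,q}}\}$ over $(j,k),(p,q)\in\set{n}_{\neq}^2$ gives $2t^2u\,n^2(n-1)\gamma(tu/4)$; multiplying by the $(n-2)!\,h_2(tu)$ from Step~1 and dividing by $n!\,4n$ leaves $\tfrac12 t^2u\,h_2(tu)\gamma(tu/4)$, i.e.\ \eqref{e721456}. For $f_2$ and $f_3$ the difficulty is that the two $b$-factors in a term like $g(b_{j,k,p,q},(tu/2)b_{j,\ell,v,p})$ carry different index patterns. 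I would use the involution of the index set that swaps $k\leftrightarrow\ell$ and $q\leftrightarrow v$ (and, for the $b_{k,m,w,q}$-term, the one swapping $j\leftrightarrow m$ and $p\leftrightarrow w$); by the antisymmetries $b_{a,b,c,d}^2=b_{a,b,d,c}^2=b_{b,a,c,d}^2$ this involution carries the above term to $g(b_{j,\ell,v,p},(tu/2)b_{j,k,p,q})$, so the whole sum equals the average over matched pairs of $g(x,cy)+g(y,cx)$ with $c=tu/2$. Lemma~\ref{l375876}(b) bounds this by the average of $g(x,cx)+g(y,cy)$, and relabeling the dummy indices shows the two halves contribute equally; what remains is a sum of diagonal terms $b_{j,k,p,q}^2\min\{1,\abs{(tu/2)b_{j,k,p,q}}\}$. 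Since a diagonal term depends only on $(j,k,p,q)$, the leftover indices ($\ell,v$ for $f_2$; the ordered pairs $(\ell,m)$ and $(v,w)$ for $f_3$) contribute a factor $(n-2)^2$, respectively $(n-2)^2(n-3)^2$, so the sums are at most $(n-2)^2 n^2(n-1)\gamma(tu/2)$ and $(n-2)^2(n-3)^2 n^2(n-1)\gamma(tu/2)$ (the $b_{k,m,w,q}$-term obeying the same bound). Substituting into Steps~1--2 and simplifying the factorials by $n!=n(n-1)(n-2)(n-3)!=n(n-1)(n-2)(n-3)(n-4)!$ produces the coefficients $\tfrac{2(n-2)}{n(n-1)}$ and $\tfrac{(n-2)(n-3)}{n(n-1)}$, which are \eqref{e78252} and \eqref{e8976218}. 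The main obstacle is Step~3: choosing the involutions and tracking the $b$-antisymmetries and the combinatorial multiplicities precisely enough that the stated constants come out exactly.
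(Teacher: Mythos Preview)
Your proposal is correct and follows essentially the same route as the paper: the same decomposition by fixing $r(j),r(k),\dots$ and invoking Lemma~\ref{l287659}, the same pointwise use of \eqref{e982759}, the same split via Lemma~\ref{l375876}(\ref{l375876.a}) for $f_3$, and exactly the same involutions ($k\leftrightarrow\ell$, $q\leftrightarrow v$ and $j\leftrightarrow m$, $p\leftrightarrow w$) followed by Lemma~\ref{l375876}(\ref{l375876.b}) to diagonalize. Your multiplicity counts $(n-2)^2$ and $(n-2)^2(n-3)^2$ and the resulting constants match the paper's computation, so the ``main obstacle'' you flag is already resolved.
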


\begin{proof}[Proof of \eqref{e721456}]
For $u\in[0,1]$, $(j,k)\in\set{n}_{\neq}^2$, and 
$(v,w)\in\set{n}_{\neq}^2$, inequality \eqref{e982759} implies that 
\begin{align*}
\abs{1-\ii tu  b_{j,k,v,w}
-\exp(-\ii tu  b_{j,k,v,w})}
\leq 2\abs{tub_{j,k,v,w}}
\min\Bigl\{1,\frac{\abs{tu}}{4}\abs{b_{j,k,v,w}}\Bigr\}
\end{align*} 
and Lemma \ref{l287659} gives 
\begin{align*}
\frac{1}{(n-2)!}
\ABS{\sum_{\newatop{r\in\set{n}_{\neq}^n}{r(j)=v,r(k)=w}}
\ee^{\ii t u  d_r}}
&=\frac{1}{(n-2)!}
\ABS{\sum_{r\in(\set{n}\setminus\{v,w\})_{\neq}^{\set{n}
\setminus\{j,k\}}}
\exp\Bigl(\ii tu\sum_{\ell\in\set{n}\setminus\{j,k\}}
a_{\ell,r(\ell)}\Bigr)} 
\leq h_2(tu).
\end{align*}
Therefore
\begin{align*}
\frac{\abs{f_1(u)}}{n!4n}
&\leq\frac{1}{n!4n}
\sum_{(j,k)\in\set{n}_{\neq}^2}
\sum_{(v,w)\in\set{n}_{\neq}^2}
\abs{t b_{j,k,v,w}}
\abs{1-\ii tub_{j,k,v,w}-\exp(-\ii t u b_{j,k,v,w})}
\ABS{\sum_{\newatop{r\in\set{n}_{\neq}^n}{r(j)=v,r(k)=w}}
\ee^{\ii t ud_r}}\\ 
&\leq\frac{t^2u}{2}h_2(tu)\frac{1}{n^2(n-1)}
\sum_{(j,k)\in\set{n}_{\neq}^2}
\sum_{(v,w)\in\set{n}_{\neq}^2}b_{j,k,v,w}^2
\min\Bigl\{1,\frac{\abs{tu}}{4}\abs{b_{j,k,v,w}}\Bigr\}\\
&=\frac{t^2u}{2}h_2(tu)\gamma\Bigl(\frac{tu}{4}\Bigr).
\end{align*}
which shows \eqref{e721456}. 
\end{proof}

\begin{proof}[Proof of \eqref{e78252}]
If $n=2$, then both sides of the inequality in \eqref{e78252} 
are equal to zero. Let us now assume that $n\geq 3$.  
For $u\in[0,1]$,  $(j,k,\ell)\in\set{n}_{\neq}^3$, and 
$(p,v,w)\in\set{n}_{\neq}^3$, 
we obtain from \eqref{e982759} that
\begin{align*}
\abs{\exp(\ii t u b_{j,\ell,w,p})-1}
\leq 2\min\Bigl\{1, \frac{\abs{tu}}{2}\abs{b_{j,\ell,w,p}}\Bigr\}
\end{align*}
and Lemma \ref{l287659} implies that 
\begin{align*}
\frac{1}{(n-3)!}
\ABS{\sum_{\newatop{r\in\set{n}_{\neq}^n}{(r(j),r(k),r(\ell))=(p,v,w)}}
\ee^{\ii tu d_r}}
&=\frac{1}{(n-3)!}
\ABS{\sum_{r\in(\set{n}\setminus\{p,v,w\})_{\neq}^{\set{n}
\setminus\{j,k,\ell\}}}\exp\Bigl(\ii tu
\sum_{m\in\set{n}\setminus\{j,k,\ell\}}a_{m,r(m)}\Bigr)}\\
&\leq h_3(tu).
\end{align*}
Hence
\begin{align*}
\frac{\abs{f_2(u)}}{n!\,n^2(n-1)}
&\leq \frac{1}{n!\,n^2(n-1)}
\sum_{(j,k,\ell)\in\set{n}_{\neq}^3}
\sum_{(p,v,w)\in\set{n}_{\neq}^3}t^2u b_{j,k,p,v}^2\\
&\quad{}\times
\abs{\exp(\ii tu b_{j,\ell,w,p})-1}
\ABS{\sum_{\newatop{r\in\set{n}_{\neq}^n}{
(r(j),r(k),r(\ell))=(p,v,w)}}\ee^{\ii t ud_r}}\\
&\leq\frac{2t^2u}{n^3(n-1)^2(n-2)}h_3(tu)T(u),
\end{align*}
where 
\begin{align*}
T(u)&=\sum_{(j,k,\ell)\in\set{n}_{\neq}^3}
\sum_{(p,v,w)\in\set{n}_{\neq}^3} b_{j,k,p,v}^2
\min\Bigl\{1, \frac{\abs{tu}}{2}\abs{b_{j,\ell,p,w}}\Bigr\}.
\end{align*}
Here we used that $\abs{b_{j,\ell,w,p}}=\abs{b_{j,\ell,p,w}}$.
By using $g$ as in Lemma \ref{l375876}, we get 
\begin{align*}
T(u)
&=\sum_{(j,k,\ell)\in\set{n}_{\neq}^3}
\sum_{(p,v,w)\in\set{n}_{\neq}^3}
g\Bigl(b_{j,k,p,v},\frac{tu}{2}b_{j,\ell,p,w}\Bigr)
=\sum_{(j,k,\ell)\in\set{n}_{\neq}^3}
\sum_{(p,v,w)\in\set{n}_{\neq}^3}
g\Bigl(b_{j,\ell,p,w},\frac{tu}{2}b_{j,k,p,v}\Bigr),
\end{align*}
where the latter equality follows by interchanging $k$ 
with $\ell$ and $v$ with $w$. Using 
Lemma \ref{l375876}\ref{l375876.b}, 
\begin{align*}
T(u)
&=\sum_{(j,k,\ell)\in\set{n}_{\neq}^3}
\sum_{(p,v,w)\in\set{n}_{\neq}^3}
\frac{1}{2}\Bigl(g\Bigl(b_{j,k,p,v},\frac{tu}{2}b_{j,\ell,p,w}\Bigr)
+g\Bigl(b_{j,\ell,p,w},\frac{tu}{2}b_{j,k,p,v}\Bigr)\Bigr)\\
&\leq\sum_{(j,k,\ell)\in\set{n}_{\neq}^3}
\sum_{(p,v,w)\in\set{n}_{\neq}^3}\frac{1}{2}\Bigl(
g\Bigl(b_{j,k,p,v},\frac{tu}{2}b_{j,k,p,v}\Bigr)
+g\Bigl(b_{j,\ell,p,w},\frac{tu}{2}b_{j,\ell,p,w}\Bigr)\Bigr)\\
&=\sum_{(j,k,\ell)\in\set{n}_{\neq}^3}
\sum_{(p,v,w)\in\set{n}_{\neq}^3} 
g\Bigl(b_{j,k,p,v},\frac{tu}{2}b_{j,k,p,v}\Bigr)
=n^2(n-1)(n-2)^2\gamma\Bigl(\frac{tu}{2}\Bigr).
\end{align*}
Consequently 
\begin{align*} 
\frac{\abs{f_2(u)}}{n!\,n^2(n-1)}
&\leq \frac{2t^2u}{n^3(n-1)^2(n-2)}h_3(tu)
n^2(n-1)(n-2)^2\gamma\Bigl(\frac{tu}{2}\Bigr)\\
&= \frac{2(n-2)t^2u}{n(n-1)}h_3(tu)\gamma\Bigl(\frac{tu}{2}\Bigr),
\end{align*}
which proves \eqref{e78252}. 
\end{proof}

\begin{proof}[Proof of \eqref{e8976218}]
If $n\in\{2,3\}$, then both sides of the inequality in 
\eqref{e8976218} are equal to zero. Let us now assume that $n\geq4$. 
For $u\in[0,1]$, $(j,k,\ell,m)\in\set{n}_{\neq}^4$, and 
$(p,q,v,w)\in\set{n}_{\neq}^4$, inequality \eqref{e982759} gives
\begin{align*}
\abs{\exp(\ii tu(b_{j,\ell,v,p}+b_{k,m,w,q}))-1}
\leq2\min\Bigl\{1,\frac{\abs{tu}}{2}\abs{b_{j,\ell,v,p}+b_{k,m,w,q}}
\Bigr\}
\end{align*}
and from Lemma \ref{l287659} it follows that
\begin{align*}
\lefteqn{\frac{1}{(n-4)!}
\ABS{\sum_{\newatop{r\in\set{n}_{\neq}^n}{
(r(j),r(k),r(\ell),r(m))=(p,q,v,w)}}\ee^{\ii t u d_r}}}\\
&=\frac{1}{(n-4)!}
\ABS{\sum_{r\in(\set{n}\setminus\{p,q,v,w\})_{\neq}^{\set{n}
\setminus\{j,k,\ell,m\}}}\exp\Bigl(\ii t u 
\sum_{s\in\set{n}\setminus\{j,k,\ell,m\}}a_{s,r(s)}\Bigr)}
\leq h_4(tu).
\end{align*}
Hence 
\begin{align}
\frac{\abs{f_3(u)}}{n!4n^2(n-1)} 
&\leq  \frac{1}{n!4n^2(n-1)}\sum_{(j,k,\ell,m)\in\set{n}_{\neq}^4}
\sum_{(p,q,v,w)\in\set{n}_{\neq}^4}t^2ub_{j,k,p,q}^2\nonumber\\
&\quad{}\times\abs{\exp(\ii tu(b_{j,\ell,v,p}+b_{k,m,w,q}))-1}
\ABS{\sum_{\newatop{r\in\set{n}_{\neq}^n}{
(r(j),r(k),r(\ell),r(m))=(p,q,v,w)}}\ee^{\ii t ud_r}}\nonumber\\
&\leq \frac{(n-4)!t^2u}{n!2n^2(n-1)}h_4(tu) T_1(u),\label{e987326589}
\end{align}
where
\begin{align*}
T_1(u)
&=\sum_{(j,k,\ell,m)\in\set{n}_{\neq}^4}
\sum_{(p,q,v,w)\in\set{n}_{\neq}^4}b_{j,k,p,q}^2
\min\Bigl\{1,\frac{\abs{tu}}{2}\abs{b_{j,\ell,v,p}
+b_{k,m,w,q}}\Bigr\}.
\end{align*}
Using Lemma \ref{l375876}\ref{l375876.a} 
with $g$ being defined there together with 
$\abs{b_{j,k,\ell,m}}=\abs{b_{k,j,\ell,m}}=\abs{b_{j,k,m,\ell}}$ 
for all $j,k,\ell,m\in\set{n}$, it follows that
\begin{align}
T_1(u)
&\leq T_2(u)+T_3(u)\label{e514132}
\end{align}
with 
\begin{align}
T_2(u)
&=\sum_{(j,k,\ell,m)\in\set{n}_{\neq}^4}
\sum_{(p,q,v,w)\in\set{n}_{\neq}^4}
g\Bigl(b_{j,k,p,q},\frac{tu}{2}b_{j,\ell,p,v}\Bigr),\label{e129875}\\
\quad
T_3(u)
&=\sum_{(j,k,\ell,m)\in\set{n}_{\neq}^4}
\sum_{(p,q,v,w)\in\set{n}_{\neq}^4}
g\Bigl(b_{j,k,p,q},\frac{tu}{2}b_{m,k,w,q}\Bigr).
\label{e86214565}
\end{align}
By interchanging $k$ with $\ell$ and $q$ with $v$, we get
\begin{align}
T_2(u)
=\sum_{(j,k,\ell,m)\in\set{n}_{\neq}^4}
\sum_{(p,q,v,w)\in\set{n}_{\neq}^4}
g\Bigl(b_{j,\ell,p,v},\frac{tu}{2}b_{j,k,p,q}\Bigr).
\label{e53165090}
\end{align}
Interchanging $j$ with $m$ and $p$ with $w$ leads to
\begin{align}
T_3(u)
&=\sum_{(j,k,\ell,m)\in\set{n}_{\neq}^4}
\sum_{(p,q,v,w)\in\set{n}_{\neq}^4}
g\Bigl(b_{m,k,w,q},\frac{tu}{2}b_{j,k,p,q}\Bigr).
\label{e75186}
\end{align}
Therefore, by using \eqref{e129875}, \eqref{e53165090}, 
and Lemma \ref{l375876}\ref{l375876.b}, 
\begin{align}
T_2(u)
&=\sum_{(j,k,\ell,m)\in\set{n}_{\neq}^4}
\sum_{(p,q,v,w)\in\set{n}_{\neq}^4}
\frac{1}{2}\Bigl(
g\Bigl(b_{j,k,p,q},\frac{tu}{2}b_{j,\ell,p,v}\Bigr)
+g\Bigl(b_{j,\ell,p,v},\frac{tu}{2}b_{j,k,p,q}\Bigr)\Bigr)
\nonumber\\
&\leq \sum_{(j,k,\ell,m)\in\set{n}_{\neq}^4}
\sum_{(p,q,v,w)\in\set{n}_{\neq}^4}
\frac{1}{2}\Bigl(
g\Bigl(b_{j,k,p,q},\frac{tu}{2}b_{j,k,p,q}\Bigr)
+g\Bigl(b_{j,\ell,p,v},\frac{tu}{2}b_{j,\ell,p,v}\Bigr)\Bigr)
\nonumber\\
&=\sum_{(j,k,\ell,m)\in\set{n}_{\neq}^4}
\sum_{(p,q,v,w)\in\set{n}_{\neq}^4}
g\Bigl(b_{j,k,p,q},\frac{tu}{2}b_{j,k,p,q}\Bigr)
\nonumber\\
&=n^2(n-1)(n-2)^2(n-3)^2\gamma\Bigl(\frac{tu}{2}\Bigr).
\label{e2866598}
\end{align}
and similarly, using \eqref{e86214565}, \eqref{e75186}, and 
Lemma \ref{l375876}\ref{l375876.b}, 
\begin{align}
T_3(u) 
\leq n^2(n-1)(n-2)^2(n-3)^2\gamma\Bigl(\frac{tu}{2}\Bigr).
\label{e71486}
\end{align}
Using \eqref{e514132}, 
\eqref{e2866598}, and \eqref{e71486}, we obtain
\begin{align}
T_1(u)
&\leq 2n^2(n-1)(n-2)^2(n-3)^2\gamma\Bigl(\frac{tu}{2}\Bigr).
\label{e872587}
\end{align}
Combining \eqref{e987326589} and \eqref{e872587}, we get
\eqref{e8976218}.
\end{proof}

\section{Remaining proofs}\label{s7145876}
\begin{proof}[Proof of \eqref{e9726578}]
We have
$\sum_{j=1}^n \widetilde{y}_{j,s}
=0=\sum_{r=1}^n\widetilde{y}_{k,r}$ for all $(k,s)\in\set{n}^2$
and therefore
\eqref{e861456} implies that 
\begin{align}
(n-1)\beta
&=\sum_{j=1}^n\sum_{r=1}^n \widetilde{y}_{j,r}^2
=\frac{1}{n^2}\sum_{j=1}^n\sum_{r=1}^n\sum_{k=1}^n\sum_{s=1}^n
\widetilde{y}_{j,r}(y_{j,r}-y_{k,r}-y_{j,s}+y_{k,s})
=\sum_{j=1}^n\sum_{r=1}^n\widetilde{y}_{j,r}y_{j,r}\nonumber\\
&=\frac{1}{n^2}\sum_{(j,k)\in\set{n}_{\neq}^2}
\sum_{(r,s)\in\set{n}_{\neq}^2}y_{j,r}z_{j,k,r,s}.\label{e7618670}
\end{align}
By interchanging $j$ with $k$ in the right-hand side of 
\eqref{e7618670}, we obtain 
\begin{align}\label{e7618671}
(n-1)\beta
&=-\frac{1}{n^2}
\sum_{(j,k)\in\set{n}_{\neq}^2}\sum_{(r,s)\in\set{n}_{\neq}^2}
y_{k,r}z_{j,k,r,s}.
\end{align}
By adding the right-hand sides of \eqref{e7618670} 
and \eqref{e7618671}, we get
\begin{align}\label{e6153872}
(n-1)\beta
&=\frac{1}{2n^2}
\sum_{(j,k)\in\set{n}_{\neq}^2}\sum_{(r,s)\in\set{n}_{\neq}^2}
(y_{j,r}-y_{k,r})z_{j,k,r,s}.
\end{align}
By interchanging $r$ with $s$, 
\begin{align}\label{e6153873}
(n-1)\beta&=-\frac{1}{2n^2}
\sum_{(j,k)\in\set{n}_{\neq}^2}\sum_{(r,s)\in\set{n}_{\neq}^2}
(y_{j,s}-y_{k,s})z_{j,k,r,s}
\end{align}
and adding the right-hand sides of \eqref{e6153872} and 
\eqref{e6153873}, 
\begin{align*}
(n-1)\beta
&=\frac{1}{4n^2}
\sum_{(j,k)\in\set{n}_{\neq}^2}\sum_{(r,s)\in\set{n}_{\neq}^2}
(y_{j,r}-y_{k,r}-y_{j,s}+y_{k,s})z_{j,k,r,s}
=\frac{1}{4n^2}
\sum_{(j,k)\in\set{n}_{\neq}^2}\sum_{(r,s)\in\set{n}_{\neq}^2}
z_{j,k,r,s}^2,
\end{align*}
which implies \eqref{e9726578}. 
\end{proof}

\begin{proof}[Proof of Lemma \ref{l1876565}]
Using Lemma \ref{l375876}\ref{l375876.c} with $g$ being defined there
together with \eqref{e872897}, 
we obtain
\begin{align*}
\gamma(x)
&=\frac{1}{n^2(n-1)}\sum_{(j,k)\in\set{n}_{\neq}^2}
\sum_{(r,s)\in\set{n}_{\neq}^2}
g(b_{j,k,r,s},xb_{j,k,r,s})\\
&\geq \frac{1}{n^2(n-1)}\sum_{(j,k)\in\set{n}_{\neq}^2}
\sum_{(r,s)\in\set{n}_{\neq}^2}
\Bigl(b_{j,k,r,s}^2-\frac{4}{27x^2}\Bigr)
=4\Bigl(\sigma^2-\frac{n-1}{27x^2}\Bigr).
\end{align*}
This shows the first inequality in \eqref{e78257}. 
The second one is clear.
Let us now prove \eqref{e2976787}. 
Identity \eqref{e687165} and the Cauchy-Schwarz inequality imply that
\begin{align*}
\widetilde{\gamma}(x)
&=\frac{1}{n^2(n-1)}\sum_{(j,k,r,s)\in\set{n}^4}
b_{j,k,r,s}\widetilde{a}_{j,r}\min\{1,\abs{x\widetilde{a}_{j,r}}\}\\
&\leq\Bigl(\frac{1}{n^2(n-1)}\sum_{(j,k)\in\set{n}_{\neq}^2}
\sum_{(r,s)\in\set{n}_{\neq}^2}
b_{j,k,r,s}^2\min\{1,\abs{x\widetilde{a}_{j,r}}\}\Bigr)^{1/2}
\sqrt{\widetilde{\gamma}(x)}.
\end{align*}
Therefore 
\begin{align*}
\widetilde{\gamma}(x)
&\leq \frac{1}{n^2(n-1)}\sum_{(j,k)\in\set{n}_{\neq}^2}
\sum_{(r,s)\in\set{n}_{\neq}^2}
b_{j,k,r,s}^2\min\{1,\abs{x\widetilde{a}_{j,r}}\}
\leq T_1+T_2,
\end{align*}
where 
\begin{align*}
T_1
&:=\frac{y^2}{n^2(n-1)}\sum_{(j,k)\in\set{n}_{\neq}^2}
\sum_{(r,s)\in\set{n}_{\neq}^2}
\widetilde{a}_{j,r}^2\min\{1,\abs{x\widetilde{a}_{j,r}}\}
\bbone_{[0,y\abs{\widetilde{a}_{j,r}}]}(\abs{b_{j,k,r,s}})\\
&\leq \frac{y^2(n-1)}{n^2}\sum_{(j,r)\in\set{n}^2}
\widetilde{a}_{j,r}^2\min\{1,\abs{x\widetilde{a}_{j,r}}\} 
=y^2\Bigl(\frac{n-1}{n}\Bigr)^2\widetilde{\gamma}(x)
\end{align*}
and
\begin{align*}
T_2
&:=\frac{1}{n^2(n-1)}\sum_{(j,k)\in\set{n}_{\neq}^2}
\sum_{(r,s)\in\set{n}_{\neq}^2}
b_{j,k,r,s}^2\min\Bigl\{1,\frac{1}{y}\abs{xb_{j,k,r,s}}\Bigr\}
\bbone_{(y\abs{\widetilde{a}_{j,r}},\infty)}(\abs{b_{j,k,r,s}})\\
&\leq \frac{1}{n^2(n-1)}\sum_{(j,k)\in\set{n}_{\neq}^2}
\sum_{(r,s)\in\set{n}_{\neq}^2}
b_{j,k,r,s}^2\min\Bigl\{1,\frac{1}{y}\abs{xb_{j,k,r,s}}\Bigr\}
=\gamma\Bigl(\frac{x}{y}\Bigr),
\end{align*}
giving the first inequality in \eqref{e2976787}. 
Using Lemma \ref{l375876}\ref{l375876.a}
and  (\ref{e97257}), we obtain
\begin{align}
\lefteqn{n^2(n-1)\gamma(x) 
=\sum_{(j,k)\in\set{n}_{\neq}^2}\sum_{(r,s)\in\set{n}_{\neq}^2}
g(b_{j,k,r,s},xb_{j,k,r,s}) }\nonumber\\
&\leq \sum_{(j,k)\in\set{n}_{\neq}^2}\sum_{(r,s)\in\set{n}_{\neq}^2}
(g(b_{j,k,r,s},x\widetilde{a}_{j,r})
+g(b_{j,k,r,s},x\widetilde{a}_{k,r})
+g(b_{j,k,r,s},x\widetilde{a}_{j,s})
+g(b_{j,k,r,s},x\widetilde{a}_{k,s})
)\nonumber\\
&\leq 4T_3, \label{e987265}
\end{align}
where 
\begin{align*}
\lefteqn{T_3
:=\sum_{(j,k)\in\set{n}_{\neq}^2}\sum_{(r,s)\in\set{n}_{\neq}^2}
g(b_{j,k,r,s},x\widetilde{a}_{j,r})
=\sum_{(j,k)\in\set{n}^2}\sum_{(r,s)\in\set{n}^2}
(\widetilde{a}_{j,r}-\widetilde{a}_{k,r}-\widetilde{a}_{j,s}
+\widetilde{a}_{k,s})^2\min\{1,x\abs{\widetilde{a}_{j,r}}\}}\\
&=\sum_{(j,k)\in\set{n}^2}\sum_{(r,s)\in\set{n}^2}
(g(\widetilde{a}_{j,r},x\widetilde{a}_{j,r})
+g(\widetilde{a}_{k,r},x\widetilde{a}_{j,r})
+g(\widetilde{a}_{j,s},x\widetilde{a}_{j,r})
+g(\widetilde{a}_{k,s},x\widetilde{a}_{j,r}))\\
&=\sum_{(j,k)\in\set{n}^2}\sum_{(r,s)\in\set{n}^2}
\Bigl(g(\widetilde{a}_{j,r},x\widetilde{a}_{j,r})
+\frac{1}{2}(g(\widetilde{a}_{k,r},x\widetilde{a}_{j,r})
+g(\widetilde{a}_{j,r},x\widetilde{a}_{k,r}))\\
&\quad{}+\frac{1}{2}(g(\widetilde{a}_{j,s},x\widetilde{a}_{j,r})
+g(\widetilde{a}_{j,r},x\widetilde{a}_{j,s}))
+\frac{1}{2}(g(\widetilde{a}_{k,s},x\widetilde{a}_{j,r}) 
+g(\widetilde{a}_{j,r},x\widetilde{a}_{k,s}))\Bigr) \hspace{3.5cm}
\end{align*}
and hence, by Lemma \ref{l375876}\ref{l375876.b},
\begin{align*}
T_3
&\leq\sum_{(j,k)\in\set{n}^2}\sum_{(r,s)\in\set{n}^2}
\Bigl(g(\widetilde{a}_{j,r},x\widetilde{a}_{j,r})
+\frac{1}{2}(g(\widetilde{a}_{k,r},x\widetilde{a}_{k,r})
+g(\widetilde{a}_{j,r},x\widetilde{a}_{j,r}))\\
&\quad{}
+\frac{1}{2}(g(\widetilde{a}_{j,s},x\widetilde{a}_{j,s})
+g(\widetilde{a}_{j,r},x\widetilde{a}_{j,r}))
+\frac{1}{2}(g(\widetilde{a}_{k,s},x\widetilde{a}_{k,s})
+g(\widetilde{a}_{j,r},x\widetilde{a}_{j,r}))
\Bigr)\\
&=4\sum_{(j,k)\in\set{n}^2}\sum_{(r,s)\in\set{n}^2}
g(\widetilde{a}_{j,r},x\widetilde{a}_{j,r})
=4n^2\sum_{(j,r)\in\set{n}^2}
\widetilde{a}_{j,r}^2\min\{1,\abs{x\widetilde{a}_{j,r}}\}
=4n^2(n-1)\widetilde{\gamma}(x).
\end{align*}
This, together with \eqref{e987265},
implies the second inequality in \eqref{e2976787}.
\end{proof}

\begin{proof}[Proof of Lemma \ref{l28766}]
Let $T$ denote the left-hand side of the inequality in 
\eqref{e218153}. Then it is easily seen that
\begin{align*}
\lefteqn{n^2(n-1) T}\\
&=\sum_{(j,k)\in\set{n}_{\neq}^2}\sum_{(r,s)\in\set{n}_{\neq}^2}
b_{j,k,r,s}^2\int_0^\infty\int_0^1tu
\exp\Bigl(-ct^2u^2-(1-u^2)\frac{t^2}{2}\Bigr)
\min\{1,\abs{xtub_{j,k,r,s}}\}\,\dd u\,\dd t\\
&\leq \sum_{(j,k)\in\set{n}_{\neq}^2}
\sum_{(r,s)\in\set{n}_{\neq}^2}b_{j,k,r,s}^2 
\min\{I_1(t),\abs{xb_{j,k,r,s}}I_2(t)\},
\end{align*}
where
\begin{align*}
I_1(t)
&=\int_0^1\int_0^\infty tu
\exp\Bigl(-ct^2u^2-(1-u^2)\frac{t^2}{2}\Bigr)\,\dd t\,\dd u
=\frac{-\log(2c)}{2\widetilde{c}^2},\\
I_2(t)
&=\int_0^1\int_0^\infty(tu)^2
\exp\Bigl(-ct^2u^2-(1-u^2)\frac{t^2}{2}\Bigr)\,\dd t\,\dd u
=\frac{\sqrt{\pii}}{2\widetilde{c}^2\sqrt{c}}
\Bigl(1-\frac{\sqrt{2c}}{\widetilde{c}}\arcsin(\widetilde{c})\Bigr),
\end{align*}
from which \eqref{e218153} follows. 
\end{proof}

\section*{Acknowledgment}
I thank Andrew Barbour for discussing with me in 2005 the problem to 
prove Bolthausen's theorem with an explicit constant. I also thank 
Lutz Mattner for some suggestions, which helped to improve a previous 
version of this paper. 


{\small 
\let\oldbibliography\thebibliography
\renewcommand{\thebibliography}[1]{\oldbibliography{#1}
\setlength{\itemsep}{0.8ex plus0.8ex minus0.8ex}} 
\linespread{1}
\selectfont
\bibliography{cclt_85}
}
\end{document}